\newcommand{\ftextnumero}{{\fontfamily{txr}\selectfont \textnumero}}
\renewcommand{\le}{\leqslant}
\renewcommand{\ge}{\geqslant}
\newcommand{\R}{\mathbb{R}}
\newcommand{\eqdef}{\stackrel{\mathrm{def}}{=}}
\DeclareMathOperator{\dom}{\mathrm{dom}}
\DeclareMathOperator{\epi}{\mathrm{epi}}
\DeclareMathOperator{\conv}{\mathrm{conv}}
\DeclareMathOperator{\Int}{\mathrm{int}}
\DeclareMathOperator{\dist}{\mathrm{dist}}
\DeclareMathOperator{\toe}{\stackrel{e}{\to}}
\DeclareMathOperator*{\Li}{\mathrm{Li}}
\DeclareMathOperator*{\Ls}{\mathrm{Ls}}
\newtheorem{thm}{Theorem}
\newtheorem{lemma}{Lemma}
\theoremstyle{remark}
\newtheorem{remark}{Remark}
\theoremstyle{definition}
\newtheorem{defn}{Definition}
\title{Hessian Measures in the Aerodynamic\\Newton Problem}
\author{L.~V.~Lokutsievskiy and M.~I.~Zelikin\footnote{This work is supported by the Program of the Presidium of the Russian Academy of Sciences \ftextnumero 01 'Fundamental Mathematics and its Applications' under grant PRAS-18-01 and by the Russian Foundation for Basic Research under grants 17-01-00805 and 17-01-00809.}}
\date{}
\begin{document}

\maketitle

\begin{abstract}
    Simple natural proofs of all known results regarding the aerodynamic Newton problem are obtained. Additional new theorems and new promising formulas in terms of Hessian measures are found.
\end{abstract}


\section{Introduction}

In 1685 Sir Isaac Newton posed and solved a problem on the profile of a body that gives minimal resistance to motion in a rare medium. Newton reasoned that the profile should be a surface of revolution of a curve $z=f(x)$ about the vertical $Oz$-axis. Suppose that the body moves vertically downward. In this case, the resistance is defined by the integral       

\begin{equation}
	\label{eq:fist_newton_problem}
	\int_0^{x_0}\frac {xdx}{1 + u'(x)^2}
\end{equation}

\noindent with conditions $u(0)=0$ and $u(x_0)=M$.

It is supposed that any particle meets the body just ones, and thus the solution should be found in the class of convex bodies. It is natural to take $v=u'(x)$ as a control with constraint $v \geq 0$. The constraint is suited to the requirement of convexity of the surface. In the absence of this condition, the lower bound of the functional \eqref{eq:fist_newton_problem} equals zero. Indeed, it may take arbitrarily small positive values if we consider sharp infinitely big oscillations of the control $v$ (slopes of the trajectory $u(x)$). Oscillations of this type are incompatible with the physical conditions under which the functional was derived (the condition of the single collision of a particle with a body).

Nevertheless, some strange phenomena that appear in dropping the convexity condition have been investigated. In particular, in 2009, Aleksenko and Plakhov found a body (biplane of Busemann) that is not convex and does not change incident rays of light due to multiple reflection. Hence this body has zero resistance and appears invisible (see \cite{Plakhov}). 

The Newton problem is considered as the first solved problem of optimal control because the convexity condition is an inequality-type condition. The Newtonian solution is refined. It turns out that the optimal control on the front part of the surface is $v \equiv 0$, i.e. the nose of the optimal surface must be truncated by a flat front part and cannot be sharp. The equation for the remaining side part admits the following explicit expression in parametric form ($p_0<0$):

\[
	\begin{cases}
		u=-\frac{p_0}{2}\left( \ln \frac{1}{v}+v^2+\frac34 v^4-\frac74\right); \\
		x=-\frac{p_0}{2}\left(\frac{1}{v}+2v+v^3\right).\\
	\end{cases}
\]

\noindent It was proved a long time ago that this solution is optimal in the class of convex bodies of revolution. The proof can be found in textbooks on optimal control theory and calculus of variations.

It was realized only at the very end of 20-th century that the statement of the problem 
\eqref{eq:fist_newton_problem} is not fully adequate. The point is that the resistance is defined by the multiple integral

\begin{equation}
	\label{eq:2dim_functional_intro}
	J = \int_{\Omega}\frac {dx_1 \wedge dx_2}{1+|u'|^2}.
\end{equation}

\noindent In 1995, Guasoni and Buttazzo constructed a counterexample in the case where $\Omega$ is a circle, which shows that the double integral \eqref{eq:2dim_functional_intro} takes smaller values on convex surfaces that are not surfaces of revolution (see \cite{Guasoni}). Marcellini \cite{Marcellini1990} proved the existence theorem for the functional \eqref{eq:2dim_functional_intro}. Nevertheless, no solution of this classical problem has been known until now. The solution is not unique due to the abandonment of the axis-symmetry condition.

The functional \eqref{eq:2dim_functional_intro} and the corresponding Euler equation

\begin{equation}
	\label{eq:euler_lagrange_equation}
	\mathrm{div}\,\frac{u'}{(1+|u'|^2)^2} = 0
\end{equation}

\noindent are similar to those for minimal surfaces:

\[
	\int_\Omega \sqrt{1+|u'|^2}dx_1\wedge dx_2\to\min
	\quad \Longrightarrow \quad
	\mathrm{div}\,\frac{u'}{\sqrt{1+|u'|^2}} = 0.
\]

There exists a deeper similarity. The mean curvature of minimal surfaces equals zero and it was proved in \cite{Buttazzo1995} that the extremals of the functional \eqref{eq:2dim_functional_intro} have zero Gaussian curvature and are developable on their smooth parts. A striking fact is that tens of minimal surfaces are known but we still do not know examples minimizing the functional \eqref{eq:2dim_functional_intro}.

\section{Statement of the problem}

Let us give an exact statement of the aerodynamic Newton problem in terms of convex analysis. Let
$\R^n$ be the Euclidean space, let $\Omega\subset\R^n$ be a convex compact set with nonempty interior, and let
$M\ge 0$. Denote by $C_M$ the set of closed convex functions $u:\R^n\to \R$ such that $\dom u = \Omega$, $\inf u=0$, and $u|_{\Omega}\le M$. The Newton problem is to find a minimum on $C_M$ of the functional

\begin{equation}
\label{problem:start}
	J(u) = \int_{\Omega}\frac{1}{1+|u'(x)|^2}\,dx \to \min,\quad u\in C_M.
\end{equation}

\noindent (note that the derivative $u'(x)$ of a convex function exists almost everywhere).

The functional $J$ defines the resistance of a convex $(n+1)$-dimensional rigid body with the form $\epi u$ (or $\epi u\cap\{z\le M\}$) in a constant vertical rarefied flow of particles (moving upward). The first interesting case of a three-dimensional body arises when $n=2$.

In our opinion, at present, there are four most significant results on the structure of optimal solutions in the class $C_M$:

\begin{enumerate}
	\item \textit{The existence of a solution.} There exists an optimal solution of problem \eqref{problem:start}.
	\item \textit{Modulus of the gradient.} If (i) $u\in C_M$ is an optimal solution, (ii) there exists $u'(x)$ at a point $x\in\Int\dom u$, and (iii) $0<u(x)<M$, then $|u'(x)|\ge 1$ (see~\cite{Buttazzo1995}).
	\item \textit{Lack of strict convexity.} If $\omega\subset\Omega$ is an open set and $u|_\omega\in C^2$, then $\det u''(x)=0$ for any $x\in\omega$ (see \cite{LachandCurvature}).
	\item \textit{The front part of the body.} Let $n=2$, and let the set $\Omega$ be the unit circle. Consider the minimization problem on a smaller class\footnote{The index $h$ in $C_M^h$ denotes the word ``heel''.} $C_M^h\subset C^M$  of functions with a smooth side boundary. Namely, $C_M^h=\{u\in C_M:u|_{\partial \Omega}=M, u\in C^2\big(\Omega\setminus u^{-1}(0)\big)\}$. Then the front side $u^{-1}(0)$ of any optimal solution in $C_M^h$ is a convex regular polygon (see~\cite{LachandPolygon}).
\end{enumerate}

\begin{remark}
\label{rm:non_empty_heel}
According to result 2, the front part $u^{-1}(0)$ of the body must contain all points $x\in\Omega$ such that $\dist(x,\partial\Omega)\ge M$.
Therefore, if

	\[
		M<\max_{x\in\omega} \dist\,(x,\partial\Omega),
	\]

	\noindent then the front part $u^{-1}(0)$ of the body has a nonempty interior.	
\end{remark}

The most astonishing result 3 means that any optimal body does not have strictly convex smooth parts of the boundary (it may explain why the classical Newtonian solution is non-optimal in the class 
$C_M$). So any small variation of an optimal function $u$ by a $C^\infty$-function may take $u$ out of the class $C_M$. Therefore, despite the fact that the problem looks like a variational one, it is not.
For instance, the optimal solution in the class $C_M$ does not need to meet the Euler equation~\eqref{eq:euler_lagrange_equation}.

In this work, we propose a unified approach to the study of the aerodynamic Newton problem using the Legendre transformation and Hessian measures (see \cite{ColesantiFirst}). Using this approach, we obtain all the four above-mentioned results and also

\begin{itemize}
	\item prove a theorem on the passage to the limit in integral \eqref{problem:start} requiring only the pointwise convergence $u_k\to u$ at the interior part of $\Omega$ (see \S \ref{sec:pointwise_limit});	
	\item investigate the properties of optimal solutions in a non-explored class $C_M^m\subset C_M$ of convex functions with a developable side boundary having a unique Maxwell stratum (see. 
	\S \ref{sec:maxwell}).
\end{itemize}

\section{Legendre's transformation}
\label{sec:Legandre_transform}

The main idea of our approach is to change the coordinates $x\mapsto p(x)$ in the integral \eqref{problem:start}, where the change $x\mapsto p(x)$ is obtained from the Legendre transformation of a convex function $u$. Generally speaking, the classical Legendre transformation $u^*(p)=\sup_{x}\big(\langle p,x\rangle - u(x)\big)$ defines the mapping $x\mapsto p=u'(x)$ only if $u\in C^1$. In the general case $u\not\in C^1$, the mapping $x\mapsto p$ is multivalued and the direct change is impossible. Nevertheless, the described change of coordinates in the integral \eqref{problem:start} is possible due to Colesanti and Hug \cite{ColesantiFirst,ColesantiHug}, since the Lebesgue measure $L^n$ on $\R^n$ turns into the Hessian measure $F_0$ on $\R^{n*}$ defined by the conjugate function~$u^*$.

Let us give a short clarification to the Hessian measures. In the work \cite{ColesantiFirst} it was proved that a convex function (in our case $u^*$) defines a collection of measures $F_j$, $j=0,\ldots,n$, on $n$-dimensional Euclidean space as follows. Let $\eta\subset \R^{n*}$ be a Borel set.
Let us define\footnote{Here we use the canonical isomorphism $\R^n\simeq\R^{n*}$, since $\R^n$ is Euclidean in the Newton problem.}

\[
	\eta^\varepsilon = \bigcup_{p\in\eta} \big(p+\varepsilon\partial u^*(p)\big).
\]

\noindent Then the volume of $\eta^\varepsilon$ is a polynomial in $\varepsilon$, i.e. the following analogue of the Steiner formula is fulfilled \cite{Schneider}:

\[
	L^n(\eta^\varepsilon) = \sum_{j=0}^n \binom{n}{j} F_{n-j}(\eta|u^*) \varepsilon^{j},
\]

\noindent where $L^n$  is Lebesgue measure.

\begin{defn}
	The measures $F_j(\,\cdot\,|u^*)$ on $\R^{n*}$ being the coefficients of the polynomial $L^n(\eta^\varepsilon)$ defined by a convex function $u^*$, are called \textit{Hessian measures} of the function $u^*$.
\end{defn}

A general construction of the Hessian measures can be found in \cite{ColesantiHug}. Let us remark that if
$u^*\in C^2(\omega^*)$ on a domain $\omega^*\subset \R^{n*}$, then, for any $\eta\subset\omega^*$, we have

\begin{equation}
\label{eq:F_j_S_j}
	\binom{n}{j} F_{j}(\eta|u^*) = \int_\eta S_{n-j}(p) dp,
\end{equation}

\noindent where $S_j(p)$ denotes the elementary symmetric polynomial of degree $j$

\[
	S_j(p) = \sum_{1\le k_1 < \ldots<k_j\le n} \lambda_{k_1}(p)\ldots\lambda_{k_j}(p)
\]

\noindent of the eigenvalues $\lambda_1(p),\ldots,\lambda_n(p)$ of the Hessian form $(u^*)''(p)$.

It is easy to see that $F_n\equiv L^n$. Moreover, it was proved in \cite{ColesantiFirst} that
 
\begin{equation}
\label{eq:F_0_and_L_n}
	F_0(\eta|u^*) = L^n\Big( \bigcup_{p\in\eta}\partial u^*(p) \Big) = 
	L^n\Big\{ x: \partial u(x)\cap\eta\ne\emptyset \Big\}.
\end{equation}

\noindent (More general relationships between Hessian measures of a function and its conjugate can be found in  \cite[Theorem 5.8]{ColesantiHug})

We have

\[
	J(u) = \int_{\R^{n*}} \frac{1}{1+|p|^2} F_0(dp|u^*) \eqdef J^*(u^*).
\]

Let us determine the restrictions to $u^*$. The condition $0=\inf_{x\in\Omega}u(x) = \inf_{x\in\R^n}u(x)$ is equivalent to $u^*(0)=0$.

Let us denote by $s_\Omega$ the supporting function of the set $\Omega$ and calculate

\[
	\sup_{x\in\Omega}u(x) = \sup_{x\in\Omega,p\in\R^{n*}}\big(\langle p,x\rangle - u^*(p) \big) = 
	\sup_{p\in\R^{n*}}\big( s_\Omega(p) - u^*(p)\big).
\]

\noindent So the condition $u|_\Omega\le M$ is equivalent to $u^*(p)\ge s_\Omega(p)-M$ for all $p\in\R^{n*}$.

It remains to reformulate the condition $\dom u = \Omega$ in terms of $u^*$. The condition $\dom u\supset\Omega$ follows from $u|_\Omega\le M$, i.e. from $u^*(p)\ge s_\Omega(p)-M$, and the condition
$\dom u\subset\Omega$ is equivalent to
$\partial u^*(p)\subset \Omega$ for all $p\in\R^{n*}$.

\begin{defn}
Let us denote by $C_M^*$ the set of all closed convex functions $u^*$ on $\R^{n*}$ that satisfy the three following conditions:

	\[
		(i)\,u^*(0) = 0;
		\quad
		(ii)\,u^*(p)\ge s_\Omega(p)-M;
		\quad
		(iii)\,\partial u^*(p)\subset \Omega
		\qquad \forall p\in\R^{n*}.
	\]
\end{defn}

Thus, we have proved the following statement

\begin{thm}
\label{thm:Legandre_transform}
	\[
		u\in C_M \ \Longleftrightarrow\  u^*\in C_M^*
		\qquad\mbox{and}\qquad
		J(u)=J^*(u^*).
	\]

\end{thm}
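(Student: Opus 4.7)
The plan is to verify the biconditional and the functional identity separately, leveraging the derivations already assembled in the preceding paragraphs.

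First I would establish $u\in C_M \Longleftrightarrow u^*\in C_M^*$ by matching the three defining conditions of $C_M$ with (i)--(iii). Condition $u^*(0)=\sup_x(\langle 0,x\rangle-u(x))=-\inf u$ gives (i) $\Leftrightarrow \inf u=0$. Condition (ii) $\Leftrightarrow u|_\Omega\le M$ is exactly the displayed computation
\[
	\sup_{x\in\Omega}u(x) = \sup_{p\in\R^{n*}}\bigl(s_\Omega(p) - u^*(p)\bigr),
\]
which was obtained from the Fenchel--Young identity. Finally, the condition $\dom u=\Omega$ splits into two inclusions: the inclusion $\Omega\subset\dom u$ is automatic from $u|_\Omega\le M$, while $\dom u\subset\Omega$ is equivalent to (iii) because $\dom u$ coincides up to boundary with the union $\bigcup_{p\in\R^{n*}}\partial u^*(p)$, and since $\Omega$ is closed and convex, the requirement that every subgradient of $u^*$ lies in $\Omega$ is exactly the condition that $u$ be supported on $\Omega$.

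For the functional identity $J(u)=J^*(u^*)$, I would invoke formula \eqref{eq:F_0_and_L_n}. A closed convex $u$ is differentiable Lebesgue-almost everywhere on $\Int\Omega$, so the set $E\subset\Omega$ on which $u'(x)$ exists has full Lebesgue measure and on $E$ we have $\partial u(x)=\{u'(x)\}$. Consequently, for every Borel set $\eta\subset\R^{n*}$,
\[
	F_0(\eta|u^*)=L^n\bigl\{x:\partial u(x)\cap\eta\ne\emptyset\bigr\}=L^n\bigl(E\cap(u')^{-1}(\eta)\bigr),
\]
which says that $F_0(\,\cdot\,|u^*)$ is the pushforward of $L^n|_\Omega$ under the Legendre map $x\mapsto u'(x)$. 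The standard abstract change-of-variables formula then yields
\[
	J(u)=\int_\Omega\frac{dx}{1+|u'(x)|^2}=\int_{\R^{n*}}\frac{1}{1+|p|^2}F_0(dp|u^*)=J^*(u^*).
\]

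The main obstacle is not really analytic but bookkeeping: one has to be certain that the non-smoothness of a generic $u\in C_M$ does not spoil the change of variables. This is precisely what the Colesanti--Hug identity \eqref{eq:F_0_and_L_n} is designed to handle, as it defines $F_0$ via the full subdifferential rather than the gradient, and so it automatically absorbs the null set where $\partial u$ is multivalued. Once this black box is accepted, both directions of the theorem reduce to the matching-of-conditions check and a single application of the pushforward formula.
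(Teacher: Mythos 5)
Your proposal is correct and follows essentially the same route as the paper: the paper likewise deduces the equivalence $u\in C_M\Leftrightarrow u^*\in C_M^*$ by matching $\inf u=0$ with $u^*(0)=0$, $u|_\Omega\le M$ with $u^*\ge s_\Omega-M$ via the supporting-function computation, and $\dom u\subset\Omega$ with $\partial u^*(p)\subset\Omega$, while the identity $J(u)=J^*(u^*)$ is obtained from the Colesanti--Hug change of coordinates encoded in \eqref{eq:F_0_and_L_n}. Your added remark that $F_0(\,\cdot\,|u^*)$ is exactly the pushforward of $L^n|_\Omega$ under $x\mapsto u'(x)$ makes the change-of-variables step slightly more explicit than the paper's direct appeal to \cite{ColesantiFirst,ColesantiHug}, but the underlying argument is the same.
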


\begin{remark}
	It is easy to see that if $u^*\in C_M^*$, then $u^*\le s_\Omega$. Indeed,
	
	\[
		u^*(p) = \sup_{x\in\Omega}\big(\langle p,x\rangle - u(x)\big) \le
		\sup_{x\in\Omega}\langle p,x\rangle - \inf_{x\in\Omega}u(x) = s_\Omega(p).
	\]
	
	\noindent In particular, since $\Omega=\dom u$ is a compact set, we have $\dom u^*=\R^{n*}$.
	
\end{remark}

The aerodynamic Newton problem in the dual space $\R^{n*}$ is formulated as follows:

\[
	J^*(u^*)\to\min,\qquad u^*\in C_M^*.
\]

\section{Passage to the limit}
\label{sec:pointwise_limit}

We show that the continuity of the functional $J$ relative to the pointwise convergence functions in $C_M$ follows from Theorem \ref{thm:Legandre_transform}. This result may also serve as a confirmation of numerical methods, because it proves that any function near an optimal solution gives the value of the functional $J$ close to the optimal.

\begin{thm}
\label{thm:pointwise_limit}
	Suppose that a sequence $u_k\in C_M$ converges pointwise on the interior $\Int\Omega$ to a function $u\in C_M$. Let $f:\R^{n*}\to\R$ be a bounded continuous function. Then

	\[
		\lim_{k\to\infty}\int_{\Omega}f(u_k'(x))\,dx =
		\int_{\Omega}f(u'(x))\,dx,
	\]

	\noindent and all these integrals are defined and finite.
	
\end{thm}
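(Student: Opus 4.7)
The plan is to transfer everything to the dual side via Theorem~\ref{thm:Legandre_transform} and exploit the continuity of Hessian measures there. First I would observe that the derivation preceding Theorem~\ref{thm:Legandre_transform} actually yields, for every bounded Borel $f$ and every $u\in C_M$,
\[
    \int_\Omega f(u'(x))\,dx \;=\; \int_{\R^{n*}} f(p)\,F_0(dp|u^*),
\]
since by~\eqref{eq:F_0_and_L_n} the measure $F_0(\,\cdot\,|u^*)$ coincides with the push-forward of $L^n|_\Omega$ under the a.e.-defined map $x\mapsto u'(x)$ (the set of non-differentiability being a null set). In particular, both integrals are finite and bounded by $\|f\|_\infty L^n(\Omega)$.

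Next I would show that the hypothesis forces epi-convergence $u_k\toe u$ on $\R^n$. Pointwise convergence of convex functions on the open set $\Int\Omega$, together with the uniform bound $0\le u_k\le M$, upgrades to locally uniform convergence on $\Int\Omega$ by a standard convex-analysis lemma. The only delicate point is the liminf inequality at a boundary point $x^*\in\partial\Omega$: for $x_k\to x^*$ and an auxiliary $y\in\Int\Omega$, the convexity estimate $u_k(\lambda y+(1-\lambda)x_k)\le \lambda u_k(y)+(1-\lambda)u_k(x_k)$ passes to the limit (the left side converges via locally uniform convergence on $\Int\Omega$), and then sending $\lambda\to 0^+$ and invoking the lower semicontinuity of $u$ at $x^*$ gives $u(x^*)\le\liminf u_k(x_k)$. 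By Wijsman's theorem, epi-convergence of $u_k$ is equivalent to epi-convergence of the conjugates $u_k^*\toe u^*$, which (since all $u_k^*,\,u^*$ are finite and convex on $\R^{n*}$) is equivalent to locally uniform convergence $u_k^*\to u^*$.

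The principal analytic input, and in my view the main obstacle, is the continuity of Hessian measures under locally uniform convergence of convex functions, established in~\cite{ColesantiFirst,ColesantiHug}: it yields weak convergence
\[
    F_0(\,\cdot\,|u_k^*)\ \longrightarrow\ F_0(\,\cdot\,|u^*)
\]
of Borel measures on $\R^{n*}$. I would use this as a black box; everything else is bookkeeping around it.

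Finally, to pair a bounded continuous (but not necessarily compactly supported) $f$ with a weakly convergent sequence of measures, I would establish uniform tightness of the family $\{F_0(\,\cdot\,|u_k^*)\}$. If $p\in\partial u_k(x)$ with $|p|>R$, the subgradient inequality together with $0\le u_k\le M$ on $\Omega$ gives $\langle p,y-x\rangle\le M$ for every $y\in\Omega$; choosing $y$ of the form $x+tp/|p|$ shows that the distance from $x$ to $\partial\Omega$ in direction $p/|p|$ is at most $M/|p|\le M/R$, whence $\dist(x,\partial\Omega)\le M/R$. Consequently
\[
    F_0(\{|p|>R\}\,|\,u_k^*)\;\le\;L^n\bigl\{x\in\Omega:\dist(x,\partial\Omega)\le M/R\bigr\}\;\xrightarrow[R\to\infty]{}\;0,
\]
uniformly in $k$; combined with the weak convergence above this gives the claim.
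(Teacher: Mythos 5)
Your proposal follows essentially the same route as the paper's own proof: push $\int_\Omega f(u'(x))\,dx$ to the dual side via $F_0(\,\cdot\,|u^*)$, upgrade pointwise convergence on $\Int\Omega$ to epi-convergence $u_k\toe u$ (this is exactly the paper's Lemma~\ref{lm:pointwise_toe}), invoke bi-continuity of the Young--Fenchel conjugate (the paper cites Mosco and Joly, you call it Wijsman's theorem; same fact), apply the Colesanti--Hug continuity of Hessian measures to get convergence against $C_c$, and then establish uniform tightness by the $\dist(x,\partial\Omega)\le M/R$ estimate to handle an arbitrary bounded continuous $f$; the tightness argument is word-for-word the paper's. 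The only cosmetic differences are in the epi-convergence step: the paper verifies the two Kuratowski inclusions $\epi u\subset\Li\epi u_k$ (via density of $\Int\epi u$ and closedness of the lower limit) and $\Ls\epi u_k\subset\epi u$ (splitting into interior and boundary cases, using a convex-hull trick for the boundary), while you compress the first inclusion into ``a standard convex-analysis lemma'' and handle the boundary liminf directly with the convexity inequality $u_k(\lambda y+(1-\lambda)x_k)\le\lambda u_k(y)+(1-\lambda)u_k(x_k)$ followed by $\lambda\to0^+$ and lower semicontinuity of $u$ — this is correct and amounts to the same computation. One small terminological caveat: the Colesanti--Hug theorem gives convergence in the $C_c^*$ (vague) topology, not weak convergence against all of $C_b$ outright; you use the word ``weak convergence'' but then correctly supply the tightness needed to bridge the gap, so the argument is sound.
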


The proof of the theorem is based on the fact that the Legendre transformation is bi-continuous relative to the convergence of epigraphs of convex functions in the sense Kuratowski. Let us give the necessary definitions.

\begin{defn}
	Let $A_k\subset \R^n$  be a sequence of sets. The sets

	\[
		\Li_{k\to\infty} A_k = \{x\in\R^n:\limsup_{k\to\infty}\dist(x,A_k)=0\}
	\]

	\noindent and
	
	\[
		\Ls_{k\to\infty} A_k = \{x\in\R^n:\liminf_{k\to\infty}\dist(x,A_k)=0\}.
	\]
	
	\noindent are called the lower and the upper Kuratowski limits of the sequence $A_k$, respectively.
\end{defn}

It is easy to see from the definition that the sets $\Li_{k\to\infty} A_k$ and $\Ls_{k\to\infty} A_k$ are closed, $\Li_{k\to\infty} A_k\subset \Ls_{k\to\infty} A_k$, and if all sets $A_k$ are convex, then the set $\Li_{k\to\infty} A_k$ is convex too.

\begin{defn}
	It is said that a sequence of closed convex functions $u_k$ on $\R^n$ converges in the sense of epigraphs to a closed convex function $u$ on $\R^n$ if

	\[
		\Li_{k\to\infty} \epi u_k = \Ls_{k\to\infty} \epi u_k = \epi u.
	\]

	\noindent In this case, one writes  $u_k\toe u$.
\end{defn}

To prove Theorem \ref{thm:pointwise_limit}, we shall use the be-continuity of the Legendre transformation relative to the convergence of epigraphs. Namely, if $u_k$ and $u$ are closed convex proper functions on $\R^n$, then $u_k\toe u$ iff $u^*_k\toe u^*$ \cite{Mosco,Joly}. We need the following

\begin{lemma}
\label{lm:pointwise_toe}
	Let $u_k$ and $u$ be closed convex functions on $\R^n$ with common effective domain $\dom u_k=
\dom u=\Omega$, and let $\Int\Omega\ne\emptyset$. Suppose that $|u_k|\le M$ and $|u|\le M$ on $\Omega$. Then if  $u_k(x)\to u(x)$ for any $x\in\Int\Omega$, then $u_k\toe u$.
\end{lemma}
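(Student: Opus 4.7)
The plan is to verify both Kuratowski inclusions $\epi u \subseteq \Li_{k\to\infty}\epi u_k$ and $\Ls_{k\to\infty}\epi u_k \subseteq \epi u$ after setting up two preliminary facts. First, since each $u_k$ is convex on $\R^n$ and uniformly bounded by $M$ on its effective domain $\Omega$, the family $\{u_k\}$ is locally equi-Lipschitz on every compact subset $K\subset \Int\Omega$ (with a Lipschitz constant depending only on $M$ and $\dist(K,\partial\Omega)$), so the assumed pointwise convergence upgrades to uniform convergence $u_k\to u$ on each such $K$. Second, for any $y\in\Int\Omega$ and $x\in\Omega$, the point $x_\lambda = \lambda x+(1-\lambda)y$ lies in $\Int\Omega$ for $\lambda\in[0,1)$ and satisfies $u(x_\lambda)\to u(x)$ as $\lambda\to 1^-$: convexity gives $u(x_\lambda)\le\lambda u(x)+(1-\lambda)u(y)$, hence the required $\limsup$ bound, and the lower semicontinuity of the closed convex function $u$ supplies the matching $\liminf$.

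For $\Ls_{k\to\infty}\epi u_k\subseteq \epi u$, I take a cluster point $(x,t)$ realized by $(x_{k_j},t_{k_j})\in\epi u_{k_j}$ tending to $(x,t)$; closedness of $\Omega$ forces $x\in\Omega$. If $x\in\Int\Omega$, uniform convergence on compacts yields $u_{k_j}(x_{k_j})\to u(x)$, hence $u(x)\le t$. If $x\in\partial\Omega$, I fix $y\in\Int\Omega$ and $\lambda\in(0,1)$ and invoke convexity of $u_{k_j}$:
\[
	u_{k_j}\bigl(\lambda x_{k_j}+(1-\lambda)y\bigr)\le \lambda u_{k_j}(x_{k_j})+(1-\lambda)u_{k_j}(y)\le \lambda t_{k_j}+(1-\lambda)u_{k_j}(y).
\]
For large $j$ the argument on the left-hand side lies in a compact subset of $\Int\Omega$ (because $\lambda x+(1-\lambda)y\in\Int\Omega$), so passing to the limit gives $u(\lambda x+(1-\lambda)y)\le \lambda t+(1-\lambda)u(y)$; sending $\lambda\to 1^-$ and applying the preliminary fact produces $u(x)\le t$.

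For the reverse inclusion $\epi u\subseteq \Li_{k\to\infty}\epi u_k$, I pick $(x,t)\in\epi u$. When $x\in\Int\Omega$ the points $(x,\max(t,u_k(x)))\in\epi u_k$ tend to $(x,t)$ because $u_k(x)\to u(x)\le t$. When $x\in\partial\Omega$, I again use an interior anchor $y\in\Int\Omega$ to set $x_m=(1-1/m)x+(1/m)y\in\Int\Omega$; the preliminary fact gives $u(x_m)\to u(x)\le t$, and for each fixed $m$ pointwise convergence gives $u_k(x_m)\to u(x_m)$. A diagonal extraction $m(k)\to\infty$ then produces $(x_{m(k)},\max(t,u_k(x_{m(k)})))\in\epi u_k$ converging to $(x,t)$.

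The main obstacle is the boundary case $x\in\partial\Omega$, since the hypothesis says nothing about convergence of $u_k$ at boundary points, and at such points the $u_k$ and $u$ may each jump upwards. Both inclusions must therefore be transferred from $\partial\Omega$ to points where uniform convergence on compacts is genuinely available. The common device is the convex combination with an interior anchor $y$, which reduces the boundary behavior to interior estimates — via convexity of $u_{k_j}$ for the upper Kuratowski limit, and via semicontinuity of $u$ along the segment for the lower one.
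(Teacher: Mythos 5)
Your proof is correct and follows essentially the same approach as the paper: establish local equi-Lipschitz estimates to upgrade pointwise convergence on $\Int\Omega$ to uniform convergence on compacts, then handle boundary points via convex combinations with an interior anchor. The only technical divergence is in the details of the boundary case: the paper establishes $\Int\epi u\subset\Li\epi u_k$ and passes to the closure (and for $\Ls$ shows $\conv\big((x,a),\Int\epi u\big)\subset\Ls\epi u_k$ and then uses the interior case plus closedness of $\epi u$), while you make the anchoring explicit through a diagonal extraction for $\Li$ and a direct convexity estimate followed by lower semicontinuity along the segment for $\Ls$; both routes rest on the same mechanism and are equally valid.
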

	
\begin{proof}
	To prove the lemma, we need to verify that
		
	\begin{equation}
	\label{eq:lm_3_subsets}
		\epi u \subset \Li_{k\to\infty}\epi u_k\subset \Ls_{k\to\infty}\epi u_k\subset \epi u.
	\end{equation}

	\noindent First, we remind that the intermediate inclusion follows from the definition of the Kuratowski limit. 
	
	Second, let us prove the first inclusion in \eqref{eq:lm_3_subsets}. We begin with the interior of $\epi u$:
	
	\[
		\Int\epi u = \{(x,a):x\in\Int\Omega\mbox{ and }u(x)<a\}.
	\]

	\noindent Let $(x,a)\in\Int\epi u$. Then $u_k(x)\to u(x)$ by the condition of the lemma.  Hence  $u_k(x)<a$ for the big enough $k$ and, consequently,  $(x,a)\in\Li_{k\to\infty}\epi u_k$. It remains to note that $\epi u$   is a convex set with a nonempty interior, and so
	
	\[
		\epi u \subset \mathrm{cl}\,\Int\epi u \subset 
		\mathrm{cl}\,\Li_{k\to\infty}\epi u_k = \Li_{k\to\infty}\epi u_k,
	\]

	\noindent since the set $\Li_{k\to\infty}\epi u_k$ is closed.
	
	It remains to prove the last inclusion in \eqref{eq:lm_3_subsets}. Consider a point $(x,a)\in\Ls_{k\to\infty}\epi u_k$. There are two possible cases.

	The first case is $x\in\Int\Omega$. Consider a neighborhood $\omega$ of the point $x$ compactly embedded in $\Omega$, $\omega\Subset\Omega$. In this case, all the functions $u_k$ and $u$ are Lipschitz on $\omega$ with the same Lipschitz constant $2M/\dist(\omega,\partial\Omega)$. Therefore, the sequence $u_k$ converges to $u$ uniformly on $\omega$. Since $(x,a)\in\Ls_{k\to\infty}\epi u_k$, there exists a sequence\footnote{The sequence of indices $k_j$ tends to infinity monotonically.}  $k_j$ and points $x_{k_j}\in\omega$ such that $x_{k_j}\to x$, and there exists $\lim_{j\to\infty} u_{k_j}(x_{k_j})\le a$. Therefore, the uniform convergence $u_{k_j}\rightrightarrows u$ on $\omega$ and the continuity of $u$ in $x$ give
	
	\[
		|u(x)-u_{k_j}(x_{k_j})| \le |u(x)-u(x_{k_j})| + \|u-u_{k_j}\|_{C(\omega)} 
		\stackrel{j\to\infty}{\longrightarrow} 0.
	\]

	\noindent Hence $u(x)\le a$ and $(x,a)\in\epi u$.
	
The second case is $x\in\partial\Omega$. Let us first prove that the convex hull of the point $(x,a)$ and the interior of $\epi u$ belongs to the upper limit set:  
   
	\begin{equation}
	\label{eq:lm_conv_in_Ls}
		\conv \big((x,a),\Int\epi u\big) \subset \Ls_{k\to\infty}\epi u_k.
	\end{equation}

	\noindent Take a sequence of indices $k_j$ and points $x_{k_j}\in\Omega$ such that $x_{k_j}\to x$ and there exists $\lim_{j\to\infty}u_{k_j}(x_{k_j})\le a$. Let $(y,b)\in\Int\epi u$. Then $(y,b)\in\epi u_{k}$ for a enough large $k$, since $u_k(y)\to u(y)<b$. Therefore, for any number $\lambda\in[0;1]$ and for a large enough $k_j$, one has 

	\[
		(\lambda x_{k_j}+(1-\lambda)y,\lambda u_{k_j}(x_{k_j}) + (1-\lambda)b)\in\epi u_{k_j}
	\]

	\noindent and, consequently, $(\lambda x+(1-\lambda)y,\lambda a+(1-\lambda)b)\in\Ls_{k\to\infty}\epi u_k$, which proves \eqref{eq:lm_conv_in_Ls}.

	Let us now prove that the point $(x,a)$ belongs to the closure of $\epi u$ and then use the closeness of $\epi u$ (required in the statement of the lemma). So let us find a point from $\epi u$ in any neighborhood $N$ of the point $(x,a)$. Because the convex hull of \eqref{eq:lm_conv_in_Ls} has a nonempty interior, it intersects with $N$. Take any point $(x_1,a_1)$ in this intersection. It is evident that $(x_1,a_1)\in\Ls_{k\to\infty} \epi u_k$ due to \eqref{eq:lm_conv_in_Ls}. Since $x_1\in\Int\Omega$, it follows that (according to the first case) $(x_1,a_1)\in\epi u$, as required.
\end{proof}

\begin{proof}[Proof of theorem \ref{thm:pointwise_limit}.]

Let us use the following fact. For any convex function $v\in C_M$, we have

	\[
		\int_\Omega f(v'(x))\,dx = \int_{\R^{n*}} f(p)F_0(dp,v).
	\]

	\noindent Note that the integral on the right hand side is well defined for any bounded continuous function $f$, since $F_0(\R^{n*},v)=L^n(\Omega)<\infty$. 

	First, we prove the theorem in the case where the function $f$ has a compact support. Let the sequence $u_k\in C_M$ on $\Int\Omega$ converges pointwise to a function $u\in C_M$. Then, by Lemma \ref{lm:pointwise_toe}, the sequence $u_k$ converge to $u$ in the sense of Kuratowski convergence of epigraphs. The Legendre transformation is bi-continuous relative to this convergence \cite[Theorem 1]{Mosco}; therefore, $u_k^*\stackrel{e}{\to}u^*$. It means that the sequence $u_k^*$ converges pointwise to $u^*$ on $\Int\dom u^*=\R^{n*}$ \cite[Corollary~3C]{Wets}. It remains to see that the pointwise convergence gives us the convergence $F_j(\,\cdot\,|u_k^*)\to F_j(\,\cdot\,|u^*)$ in the topology of the space $C_c^*$, which is dual to the space $C_c$ of continuous functions with compact support \cite[Theorem~1.1]{ColesantiHug}.

	Now let $|f|\le A$ be an arbitrary bounded continuous function. Let us approximate $f$ by functions with compact support. Let $\psi_j:\R^{n*}\to \R$ be a continuous function with $\psi_j(p)=1$ for $|p|\le j$, $\psi_j(p)=0$ for $|p|\ge j+1$ and $0\le\psi_j(p)\le 1$ for all $p$. Then
		
	\begin{multline}
	\label{eq:thm_poinwise_limit_leq_sum}
		\left|\int_{\R^{n*}}f\,F_0(dp,u) - \int_{\R^{n*}}f\,F_0(dp,u_k)\right| \le
		\left|\int_{\R^{n*}}\psi_j f\big[F_0(dp,u) - F_0(dp,u_k)\big]\right| + \\
		+\left|\int_{\R^{n*}}(1-\psi_j)f\,F_0(dp,u)\right| +
		\left|\int_{\R^{n*}}(1-\psi_j) f\,F_0(dp,u_k)\right|.
	\end{multline}

	Let us estimate each of the three summands on the right-hand side of inequality \eqref{eq:thm_poinwise_limit_leq_sum}. The second and third summands are estimated identically: let $v\in C_M$; then, using \eqref{eq:F_0_and_L_n}, we obtain

	\[
		\left|\int_{\R^{n*}}(1-\psi_j)f\,F_0(dp,v)\right| \le
		A\int_{|p|\ge j}\,F_0(dp,v) =
		A L^n\big\{x:\exists p\in\partial v(x),\ |p|\ge j\big\}.
	\]

	We claim that the measure of the set in right-hand side tends to zero as $j\to\infty$. Indeed, if a convex function $v\in C_M$ has a subgradient at a point $x$ which is greater than $j$ in absolute value, then $x$ belongs to a neighborhood of the boundary $\partial \Omega$ of radius $M/j$. The volume of the neighborhood tends to zero as $j\to\infty$, since the compact set $\Omega$ is convex. Therefore, for any $\varepsilon>0$, there exists a number $j_0$ such that, for all $v\in C_M$, the following inequality holds:
	
	\[
		\left|\int_{\R^{n*}}(1-\psi_{j_0})f\,F_0(dp,v)\right| <\varepsilon.
	\]

	Let us fix $j=j_0$ and estimate the first summand in \eqref{eq:thm_poinwise_limit_leq_sum}. It tends to zero as $k\to\infty$, since the function  $\psi_{j_0} f$ has a compact support. Therefore, one can find a number $k_0$ such that the following inequality holds for all $k>k_0$:
 
	\[
		\left|\int_{\R^{n*}}\psi_{j_0} f\,F_0(dp,u) - \int_{\R^{n*}}\psi_{j_0} f\,F_0(dp,u_k)\right| < \varepsilon.
	\]

	\noindent We obtain the desired result by substituting the previous inequality in \eqref{eq:thm_poinwise_limit_leq_sum}.
\end{proof}

Let us note that the known result \cite{Marcellini1990,Buttazzo1995} on the existence of an optimal solution in problem \eqref{problem:start} is based on the lower semi-continuity of the functional $J$ with respect to the strong topology in $W_{\mathrm{loc}}^{1,p}(\Omega)$. Theorem \ref{thm:pointwise_limit} a gives better result under a less stringent assumption for the case $f(x,u,u')=f(u')$. Moreover, we can easily prove the existence theorem using Theorem \ref{thm:pointwise_limit}. Indeed, the functional $J$ is continuous relative to pointwise convergence on $\Int\Omega$; therefore, it is sufficient to prove that there exists a subsequence of a minimizing sequence that converges pointwise on $\Int\Omega$.

\begin{thm}
\label{thm:C_M_compact}
	Let $u_k\in C_M$ be a sequence. There exists a subsequence $u_{k_j}$ that converges pointwise on $\Int\Omega$.
\end{thm}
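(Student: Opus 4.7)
The plan is to obtain pointwise (in fact locally uniform) convergence on $\Int\Omega$ via an equicontinuity argument combined with a standard diagonal extraction. The sequence $u_k\in C_M$ is uniformly bounded, since $0\le u_k\le M$ on $\Omega$. The key additional input is the classical fact, already exploited in the proof of Lemma~\ref{lm:pointwise_toe}, that a convex function bounded by $M$ on $\Omega$ is automatically Lipschitz on any compactly embedded subset $\omega\Subset\Omega$ with Lipschitz constant at most $2M/\dist(\omega,\partial\Omega)$. Hence the restrictions $u_k|_\omega$ form a uniformly bounded, equicontinuous family.

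First, I would fix an exhaustion of $\Int\Omega$ by compact sets: choose $\omega_1\Subset\omega_2\Subset\cdots\Subset\Int\Omega$ with $\bigcup_m\omega_m=\Int\Omega$ (for instance, $\omega_m=\{x\in\Omega:\dist(x,\partial\Omega)\ge 1/m\}$ intersected with a large ball; these are compact because $\Omega$ is compact). By the Lipschitz bound above, on each $\omega_m$ the family $\{u_k\}$ is uniformly bounded and uniformly Lipschitz, so Arzel\`a--Ascoli furnishes a subsequence converging uniformly on $\omega_m$.

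Second, I would perform the standard diagonal selection: extract a subsequence $\{u_k^{(1)}\}$ of $\{u_k\}$ converging uniformly on $\omega_1$; from it extract a further subsequence $\{u_k^{(2)}\}$ converging uniformly on $\omega_2$; and so on. The diagonal sequence $v_j\eqdef u_j^{(j)}$ is eventually a subsequence of every $\{u_k^{(m)}\}$, hence converges uniformly on each $\omega_m$, and therefore pointwise (even locally uniformly) on $\Int\Omega=\bigcup_m\omega_m$.

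There is essentially no obstacle: the argument is a direct combination of the convex Lipschitz estimate with Arzel\`a--Ascoli and a diagonal. The only point requiring a line of care is verifying that the chosen $\omega_m$ indeed exhaust $\Int\Omega$ and are compactly contained in $\Omega$, which follows from compactness and convexity of $\Omega$ together with $\Int\Omega\ne\emptyset$. Note also that the pointwise limit $u$ need not a priori lie in $C_M$, but this is not asserted in the statement; if desired, one can recover $u\in C_M$ (after possibly redefining $u$ on $\partial\Omega$ by lower semicontinuous closure) using that pointwise limits of convex functions are convex and that $\inf u=0$, $u\le M$ pass to the limit.
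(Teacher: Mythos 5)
Your proof is correct and follows essentially the same route as the paper: exhaust $\Int\Omega$ by compactly embedded sets, use the uniform Lipschitz bound for convex functions bounded by $M$, apply Arzel\`a--Ascoli on each compact, and pass to a diagonal subsequence. The only cosmetic difference is the exact Lipschitz constant quoted, which is immaterial to the argument.
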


\begin{proof}
Let $\omega_1\subset\omega_2\subset\ldots\Subset\Omega$ be a compact exhaustion of $\Omega$. Then the functions $u_k$ are Lipschitz on any set $\omega_j$ with the same Lipschitz constant $M/\dist(\omega_j,\partial\Omega)$. Let us use the Arzel\`a--Ascoli lemma and select a subsequence that converges uniformly in~$\omega_1$; then select a subsequence that converges uniformly in $\omega_2$, and so on. So we apply the standard argument of choosing a diagonal subsequence and obtain a subsequence $u_{k_j}$ which converges uniformly on any set compactly embedded in $\Omega$. Therefore, the subsequence $u_{k_j}$ converges pointwise on $\Int\Omega$, as is required to be proved.

\end{proof}

\section{The modulus of the gradient.}

Here we propose a new proof of result 2 on the modulus of the gradient that is based on the use of Hessian measures. It will be shown that

\begin{equation}
\label{eq:grad_u_not_in_01}
	\mbox{a function }u\in C_M\mbox{ is optimal}
	\quad\Longrightarrow\quad
	|u'(x)|\not\in]0;1[\mbox{ for almost all }x\in\Omega.
\end{equation}

This assertion was proved by Buttazzo, Ferone, Kawohl \cite{Buttazzo1995} in 1995. We reformulate it in terms of the Legendre transformation. First, note that \eqref{eq:grad_u_not_in_01} is equivalent to

\[
	L^n\Big\{
		x\in \Omega: \exists p\in\partial u(x): 0<|p|<1
	\Big\} = 0
\]

\noindent Indeed, the set of points of non-differentiability of $u$ has zero measure.

So, by using \eqref{eq:F_0_and_L_n}, one finds that \eqref{eq:grad_u_not_in_01} follows from

\begin{equation}
\label{eq:F_0_0_l_p_l_1_eq_1}
	F_0\big(\{ p:0<|p|<1 \}|u^*\big) = 0.
\end{equation}

\begin{figure}[ht]
  \begin{center}
    \includegraphics[width=0.55\textwidth]{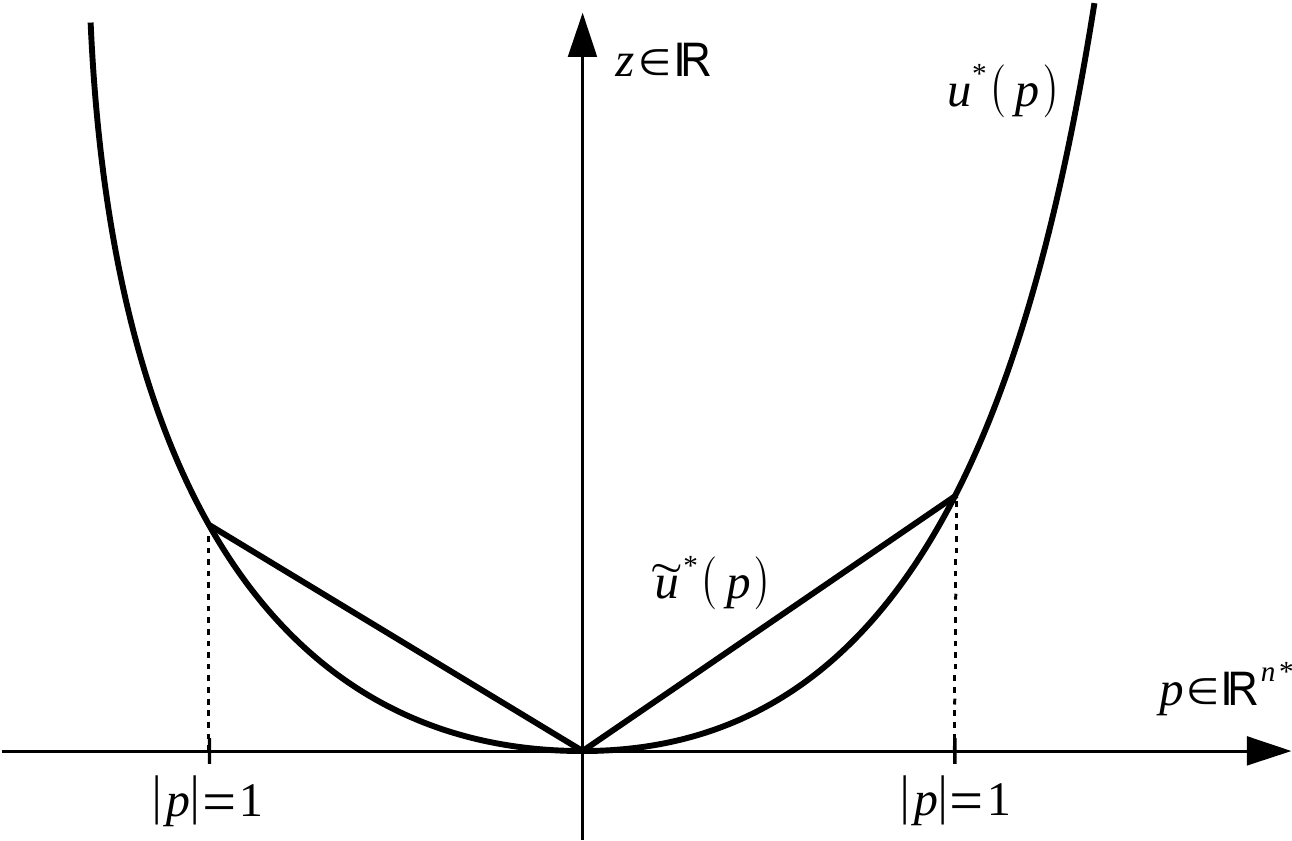}
  \end{center}
  \caption{The graph of the functions $u^*$ and $\tilde u^*$}
  \label{fig:u_and_tilde_u}
\end{figure}

Let $u^*\in C_M^*$. Consider the convex function $\tilde u^*$, which is formed by $u^*$ such that, first, $\tilde u^*(p)=u^*(p)$ for $|p|\ge 1$ and, second, the function $\tilde u^*$ is positive homogeneous for $|p|\le 1$ (see Fig. \ref{fig:u_and_tilde_u}):

\[
	\tilde u^*(p) = \conv\max\Big\{u^*(p),|p|u^*\Big(\frac{p}{|p|}\Big)\Big\}
\]

\noindent (Here one uses the natural assumption that the second argument of the maximum $|p|u^*(p/|p|)$ equals zero for $p$=0, since the continuous function $u^*(p)$ is bounded on the compact $|p|=1$).

\begin{thm}
\label{thm:grad_ge_1}
	If $u^*\in C_M^*$, then $\tilde u^*\in C_M^*$ and $J^*(\tilde u^*)\le J^*(u^*)$, and also the equality is achieved only if $\tilde u^*\equiv u^*$.
\end{thm}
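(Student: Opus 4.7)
The plan is first to verify the three conditions defining $C_M^*$ for $\tilde u^*$, then to compare $J^*(\tilde u^*)$ and $J^*(u^*)$ via the Hessian-measure formalism. For the membership step, I observe that $\tilde u^*\ge u^*$: indeed $u^*$ is a convex function dominated by $\max\{u^*(p),|p|u^*(p/|p|)\}$, and $\tilde u^*$ is by definition the largest such convex minorant. Combined with $\tilde u^*(0)\le\max\{u^*(0),0\}=0$ this gives (i); condition (ii) is then immediate. For (iii), I take $x\in\partial\tilde u^*(p)$ and use the subgradient inequality against test points $q$ with $|q|\ge 1$, where $\tilde u^*(q)=u^*(q)\le s_\Omega(q)$; replacing $q$ by $\lambda q$ and letting $\lambda\to\infty$, positive homogeneity of $s_\Omega$ yields $\langle x,q\rangle\le s_\Omega(q)$ for all $q$, so $x\in\Omega$.

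The crucial observation for the inequality is that $F_0(\,\cdot\,|\tilde u^*)$ vanishes on $\{0<|p|<1\}$. Set $K:=\partial\tilde u^*(0)$. For $0<|p|<1$ and $x\in\partial\tilde u^*(p)$, testing the subgradient inequality with $q=\lambda p$, $\lambda\in[0,1/|p|]$, combined with the positive homogeneity of $\tilde u^*$ on the unit ball, forces $\langle x,p\rangle=\tilde u^*(p)$; feeding this back into the subgradient inequality shows $\tilde u^*(q)\ge\langle x,q\rangle$ for all $q$, hence $x\in K$, so $x$ lies on the supporting hyperplane of $K$ with exterior normal $p$. Thus
\[
\bigcup_{0<|p|<1}\partial\tilde u^*(p)\subseteq\partial K,
\]
a set of $L^n$-measure zero, so $F_0(\{0<|p|<1\}|\tilde u^*)=0$ by \eqref{eq:F_0_and_L_n}. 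Since also $\tilde u^*=u^*$ on the open set $\{|p|>1\}$, the two Hessian measures agree there, and conservation of total mass ($=L^n(\Omega)$) together with elementary algebra yields
\[
J^*(u^*)-J^*(\tilde u^*)=\frac{1}{2}\bigl(F_0(\{|p|=1\}|\tilde u^*)-F_0(\{|p|=1\}|u^*)\bigr)-\int_{0<|p|<1}\frac{|p|^2}{1+|p|^2}\,F_0(dp|u^*).
\]

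The main obstacle is showing this difference is nonnegative. The first term equals the $L^n$-measure of the ``ramp region'' $R=\{x\in\Omega:|\tilde u'(x)|=1\}\setminus K$ in the primal, arising from the corner of $\tilde u^*$ along $|p|=1$. Translating everything to the primal via the pushforward $x\mapsto u'(x)$, the inequality becomes the estimate
\[
\int_R\frac{1-|u'(x)|^2}{2(1+|u'(x)|^2)}\,dx\;\ge\;\int_K\frac{|u'(x)|^2}{1+|u'(x)|^2}\,dx,
\]
which I expect to derive from the fact that each affine piece of $\tilde u|_R$ is a tangent plane to $u$ at a point of $\{|u'|=1\}$, so the geometry of $R$ (its thickness across $\partial K$, and thus $L^n(R)$) is dictated precisely by the heights $u(x_0)$ at these tangent points. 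Strict inequality will hold unless $R=\emptyset$, which occurs exactly when $u^*$ is already positively homogeneous on the unit ball, i.e.\ $\tilde u^*\equiv u^*$.
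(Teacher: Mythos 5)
Your membership argument and the observation that $F_0(\cdot\,|\tilde u^*)$ vanishes on the open annulus $\{0<|p|<1\}$ are both correct and agree with the paper (the paper states the latter directly, noting that the support of $F_0(\cdot\,|\tilde u^*)$ inside the unit ball is contained in $\{0\}\cup\{|p|=1\}$). Your algebraic decomposition of $J^*(u^*)-J^*(\tilde u^*)$ is also correct as far as it goes: after using total-mass conservation to kill the constant $1$ in $f(p)=1-\tfrac{|p|^2}{1+|p|^2}$, what remains is exactly what you wrote.

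But the crucial step is missing. The inequality you are left with --- comparing $\tfrac12\bigl(F_0(\{|p|=1\}|\tilde u^*)-F_0(\{|p|=1\}|u^*)\bigr)$ against $\int_{0<|p|<1}\tfrac{|p|^2}{1+|p|^2}\,F_0(dp|u^*)$ --- is not established; your last paragraph is a sketch of intent (``which I expect to derive from\dots'') and the proposed geometric picture of the ramp thickness being ``dictated precisely by the heights $u(x_0)$'' is not an argument. Worse, keeping the exact weight $\tfrac{|p|^2}{1+|p|^2}$ traps you: that weight is not affine in $|p|$, so the pushforward to $\Omega$ does not produce an integral of the form $\int_\Omega |u'(x)|\,dx$ that you could treat with the co-area formula. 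Nothing you have written reduces the remaining inequality to a genuinely known fact, and the claimed equivalence to a single pointwise inequality between integrals over $R$ and $K$ is not justified.

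The missing idea in the paper is to \emph{not} use the exact identity, but to bound $f(r)=\tfrac1{1+r^2}$ from below on $[0,1]$ by its tangent line at $r=1$, $h(r)=1-\tfrac r2$, which agrees with $f$ at $r=0$ and $r=1$. Since $F_0(\cdot\,|\tilde u^*)$ on the ball is supported precisely on $\{0\}\cup\{|p|=1\}$, the substitution $f\mapsto h$ leaves $\int f\,F_0(dp|\tilde u^*)$ unchanged while only decreasing $\int f\,F_0(dp|u^*)$. This yields
\[
J^*(u^*)-J^*(\tilde u^*)\;\ge\;\tfrac12\int_{|p|\le 1}|p|\bigl[F_0(dp|\tilde u^*)-F_0(dp|u^*)\bigr],
\]
and now the integrand \emph{is} linear in $|p|$. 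Pushing forward to $\Omega$ gives $\tfrac12\int_\Omega(|\tilde u'|-|u'|)\,dx$, which the co-area formula converts to $\tfrac12\int_0^M\bigl[H^{n-1}(\tilde u^{-1}(t))-H^{n-1}(u^{-1}(t))\bigr]\,dt$, and this is $\ge 0$ because $\tilde u\le u$ makes the sublevel sets nested convex bodies, whose perimeters are monotone. Equality forces equality of almost all sublevel sets, hence $\tilde u=u$. Without the tangent-line linearization your route does not close, and you should either supply a rigorous proof of the ramp inequality (which I do not believe is attainable in the form you state) or switch to this bound.
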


Note that from Theorem \ref{thm:grad_ge_1} immediately follows the needed assertion \eqref{eq:F_0_0_l_p_l_1_eq_1}. Indeed, if $u^*\in C_M^*$  is an optimal solution, then $u^*$ coincides with $\tilde u^*$ for $|p|\le 1$; thus, \eqref{eq:F_0_0_l_p_l_1_eq_1} is realized automatically.

\medskip

\begin{proof}[Proof of Theorem \ref{thm:grad_ge_1}]
	Let us prove that $\tilde u^*\in C_M^*$. It is obvious that (i) $\tilde u^*(0)=0$, and (ii) $\tilde u^*(p)\ge u^*(p)\ge s_\Omega(p)-M$. It remains to check (iii). Let us prove that $\partial \tilde u(p)\in\Omega$ for all $p$. We know that $\tilde u^*(0)=0$. We claim that if $p\ne 0$, then $\tilde u^*(\lambda p)=u^*(\lambda p)$ for large enough $\lambda>0$. Indeed, $\tilde u^*\ge u^*$, and $|p|u^*(\frac{p}{|p|})\le u^*(p)$ for $|p|\ge 1$, since $u^*(0)=0$. So if $|p|\ge 1$, then $\max\{u^*(p),|p|u^*(\frac{p}{|p|})\}=u^*(p)$, and the operation of convex hull can only decrease a function. 

	\medskip
	
	Consequently, for any $p\ne 0$ and $\lambda>1$ large enough, we have
	\[
		\tilde u^*(p) = \tilde u^*(\frac1\lambda\lambda p) \le
			\frac1\lambda \tilde u^*(\lambda p) + 
			\frac{\lambda-1}{\lambda}\tilde u^*(0) = \frac1\lambda u^*(\lambda p) \le \frac1\lambda s_\Omega(\lambda p) = s_\Omega(p).
	\]
	
	\noindent Hence, $s_\Omega(p)-M \le \tilde u^*(p)\le s_\Omega(p)$ for any $p\ne 0$. For $p=0$ these inequalities follows from the continuity of convex functions. Denote by $\tilde u$ the Legendre--Young-Fenchel transform of $\tilde u^*$, i.e.\ $\tilde u=\tilde u^{**}$. Then $\delta_\Omega\le \tilde u\le\delta_\Omega+M$. Consequently, we have $\dom \tilde u=\Omega$
	
	So if $x\in\partial\tilde u^*(p)$, then $p\in\partial\tilde u(x)$ (since $\tilde u^*$ is a closed convex function). Since $\partial \tilde u(x)=\emptyset$ for $x\not\in\dom u=\Omega$, we have $x\in\Omega$, which proves (iii).
	
	Let us prove that $J^*(\tilde u^*)\le J^*(u^*)$. The measures $F_j(\,\cdot\,|u^*)$ and $F_j(\,\cdot\,|\tilde u^*)$ coincide on the open set $\{|p|>1\}$, since $u^*(p)=\tilde u^*(p)$ for any $|p|>1$. Therefore,
	\[
		J^*(u^*) - J^*(\tilde u^*) = 
		\int_{|p|\le 1} \frac{1}{1+|p|^2}F_0(dp|u^*) - \int_{|p|\le 1} \frac{1}{1+|p|^2}\tilde F_0(dp|\tilde u^*)
	\]
	
	\begin{figure}
	\begin{center}
		\includegraphics[width=0.4\textwidth]{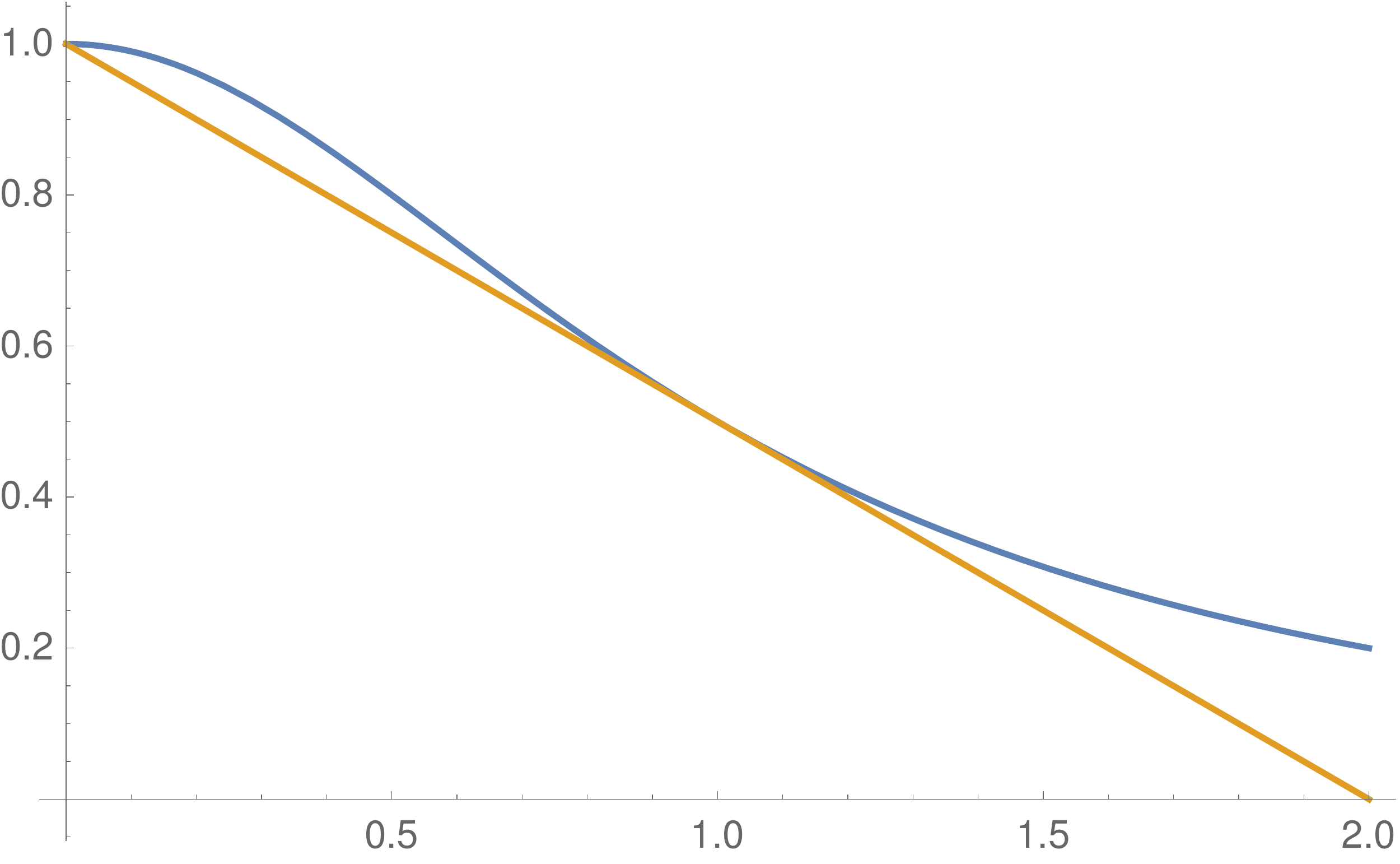}
	\end{center}
	\caption{The graphs of the functions $f(r)=1/(1+r^2)$ and $h(r)=1-r/2$}
	\label{fig:f_and_h}
	\end{figure}

	Let $f(r) = \frac{1}{1+r^2}$. Denote by $h(r)$  the linear function with $h(0)=f(0)=1$ that is tangent to the graph of the function $f$ (see Fig. \ref{fig:f_and_h}), i.e.,

	\[
		h(r) = 1-\frac12r.
	\]
	
	\noindent For brevity, we will write $f(p)=f(|p|)=\frac{1}{1+|p|^2}$ and $h(p)=h(|p|)=1-\frac12|p|$.
	
	On the one hand,
	
	\[
		\int_{|p|\le 1} f(p)F_0(dp|u^*) \ge \int_{|p|\le 1}h(p)F_0(dp|u^*),
	\]

	\noindent since $f(p)\ge h(p)$. On the other hand
	
	\[
		\int_{|p|\le 1} f(p)F_0(dp|\tilde u^*) = \int_{|p|\le 1} h(p)F_0(dp|\tilde u^*).
	\]

	\noindent Indeed, the support of the measure $F_0(dp|\tilde u^*)$ on $\{|p|\le 1\}$ is included in the union of the sphere  $\{|p|=1\}$ and the origin $\{p=0\}$, since $F_0(\{0<|p|<1\}|\tilde u^*)=0$. Hence the function $f(p)$ can be replaced by $h(p)$ (they coincide for both $p=0$ and $|p|=1$).	
	
	So
	
	\begin{multline*}
		J^*(u^*) - J^*(\tilde u^*)\ge \int_{|p|\le 1} h(p)\big[F_0(dp|u^*)-F_0(dp|\tilde u^*)\big]=\\
		=F_0(\{|p|\le 1\}|u^*)-F_0(\{|p|\le 1\}|\tilde u^*) + \frac12\int_{|p|\le 1}|p|\big[F_0(dp|\tilde u^*)- F_0(dp|u^*)\big].
	\end{multline*}

	The first difference equals 0, since
	
	\[
		F_0(\{|p|\le 1\}|u^*)-F_0(\{|p|\le 1\}|\tilde u^*) = F_0(\R^{n*}|u^*) - F_0(\R^{n*}|\tilde u^*) 
		=L^n(\Omega)-L^n(\Omega)=0
	\]

	To evaluate the integral, we return to the space $\R^n$ and use the co-area formula (by $H^{n-1}$ one denotes $(n-1)$-dimensional Hausdorff measure):
	
	\begin{multline*}
		\int_{|p|\le 1}|p|\big[F_0(dp|\tilde u^*)- F_0(dp\big|u^*)\big] = 
		\int_{\R^{n*}}|p|\big[F_0(dp|\tilde u^*)- F_0(dp|u^*)\big] = \\
		=\int_\Omega \big(|\tilde u'(x)| - |u'(x)|\big)\,dx = 
		\int_0^M \Big[H^{n-1}\big(\tilde u^{-1}(t)\big) - H^{n-1}\big(u^{-1}(t)\big) \Big]\,dt\ge 0.
	\end{multline*}

	The last inequality is fulfilled, since the sets $A_t=u^{-1}[0;t]$ and $\tilde A_t=\tilde u^{-1}[0;t]$ are convex and $A_t\subset \tilde A_t$ in view of $\tilde u\le u$. Consequently, $H^{n-1}(\partial A_t)\le H^{n-1}(\partial\tilde A_t)$.

	It remains to say that if the equality $J^*(u^*)=J^*(\tilde u^*)$ is fulfilled, then $H^{n-1}(\partial A_t)=H^{n-1}(\partial\tilde A_t)$ for almost all $t$. In this case, $A_t=\tilde A_t$ for almost all $t$ and, consequently, $u=\tilde u$.

\end{proof}

\section{Lack of strict convexity.}
\label{sec:lack_of_strict_convexity}

Let us consider the case $n=2$. In this section, we will prove a result originally proved in 1996 by Brock, Ferone, and Kawohl (see \cite{Brock1996}) and then generalized in 2001 by Lachand---Robert and Peletier (see \cite{LachandCurvature}).

\begin{thm}
\label{thm:developable_surface}
	Let $u\in C_M$ be an optimal solution of the aerodynamic Newton problem. If $\omega$ is an open subset of $\Omega$, $u\in C^2(\omega)$, and $0<u|_{\omega}<M$ then $\det u''=0$ on $\omega$.
\end{thm}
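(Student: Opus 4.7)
The plan is to argue by contradiction in the dual formulation of Theorem~\ref{thm:Legandre_transform}. Suppose $u$ is optimal but $\det u''(x_0)>0$ at some $x_0\in\omega$. Since $u\in C^2(\omega)$, $u''$ is continuous and positive definite on a whole neighborhood of $x_0$, and by Legendre duality $u^*$ is $C^2$ and strictly convex on a corresponding dual neighborhood $\omega^*$ of $p_0=u'(x_0)$, with $(u^*)''(p_0)=u''(x_0)^{-1}$. The hypothesis $0<u(x_0)<M$ gives the strict slack $s_\Omega(p_0)-M<u^*(p_0)<s_\Omega(p_0)$ in the constraints of $C_M^*$ near $p_0$. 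Moreover, Theorem~\ref{thm:grad_ge_1} combined with the continuity of $u'$ forces $|p_0|\ge 1$: if $|p_0|\in\,]0;1[$ continuity produces a positive measure set on which $|u'|\in\,]0;1[$, and if $|p_0|=0$ strict convexity at $x_0$ produces small positive gradient moduli in a punctured neighborhood, both contradicting the previous theorem.

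I construct a competitor $\tilde u^*\in C_M^*$ by \emph{tangential flattening} around $p_0$. Writing $e=p_0/|p_0|$ and letting $e^\perp\in\R^{2*}$ be a unit vector orthogonal to $e$, choose a thin rectangular slab
\[
	S=\{p_0+se+te^\perp:|s|\le r,\ |t|\le\rho\}\subset\omega^*
\]
with $r,\rho$ small and $\rho\ll r$. For each $s\in[-r,r]$, replace $u^*$ on the chord $\{p_0+se+te^\perp:|t|\le\rho\}$ by the unique affine function in $t$ matching $u^*$ at the endpoints $t=\pm\rho$; outside $S$ put $\tilde u^*=u^*$. Convexity of $u^*$ along each chord gives $\tilde u^*\ge u^*$ inside $S$ with equality on the tangential faces $\{|t|=\rho\}$, and a coordinate check of one-sided directional derivatives across the radial faces $\{|s|=r\}$ shows that $\tilde u^*$ is globally convex. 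By construction $\tilde u^*$ is $e^\perp$-affine on $S$, so $\det(\tilde u^*)''\equiv 0$ there. The three conditions defining $C_M^*$ are easy: (i) $\tilde u^*(0)=u^*(0)=0$, since $|p_0|\ge 1$ allows $r<1$ so that $0\notin S$; (ii) $\tilde u^*\ge u^*\ge s_\Omega-M$; (iii) the subgradients of $\tilde u^*$ on $\overline S$ are convex combinations of $\nabla u^*$-values at points of $\partial S$, hence lie in $\Omega$.

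The heart of the proof is the strict inequality $J^*(\tilde u^*)<J^*(u^*)$. Outside $S$ the Hessian measures $F_0(\cdot|u^*)$ and $F_0(\cdot|\tilde u^*)$ agree, and a careful accounting of the common subgradient image shows they carry the same total mass on $\overline S$. On $S$ itself $F_0(\cdot|u^*)$ is absolutely continuous with density $\det(u^*)''$, whereas $F_0(\cdot|\tilde u^*)$ is supported on the two tangential faces $\{|t|=\rho\}$ (the interior of $S$ contributes nothing, being $e^\perp$-affine). So the strict decrease amounts to showing that this redistribution of the same total mass from the interior of $S$ onto its tangential faces strictly decreases the weighted integral $\int f\,dF_0$ with $f(p)=1/(1+|p|^2)$. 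Since $|p_0+se+te^\perp|^2=(|p_0|+s)^2+t^2$ is strictly increasing in $|t|$, a Fubini reduction along the $e^\perp$-chords reduces the strict inequality to the one-dimensional statement
\[
	\int_{-\rho}^{\rho}\frac{dt}{a+t^2}>\frac{2\rho}{a+\rho^2}\qquad(a>0,\ \rho>0),
\]
which is elementary (substituting $y=\rho/\sqrt{a}$ it becomes $\arctan y>y/(1+y^2)$, valid for $y>0$ by a direct derivative check). Integrating the slicewise strict inequality over $s\in[-r,r]$ furnishes $J^*(\tilde u^*)<J^*(u^*)$, contradicting the optimality of $u$. The most delicate technical points to verify in full detail are the global convexity of $\tilde u^*$ across the radial faces of $S$ and the mass-balance $F_0(\overline S|u^*)=F_0(\overline S|\tilde u^*)$, both of which rest on the $C^2$ smoothness of $u^*$ on $\omega^*$ together with the explicit rectangular shape of $S$.
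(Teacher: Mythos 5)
Your overall route---pass to the dual, use $|p_0|\ge 1$, and exhibit a feasible competitor $\tilde u^*\in C_M^*$ with $J^*(\tilde u^*)<J^*(u^*)$---is a reasonable strategy and genuinely different from the paper's, which never builds a competitor at all but integrates $\int_{\omega^*}f\,du^*_{p_1}\wedge du^*_{p_2}$ by parts and shows that $u^*$ would have to be a weak local minimum of a first-order quadratic functional whose Legendre matrix has the negative eigenvalue $\lambda_2=\tfrac{2(1-3|p|^2)}{(1+|p|^2)^3}<0$ for $|p|\ge 1$. (Your intuition that flattening in the $e^\perp$ direction should lower $J^*$ is exactly what that negative eigenvalue detects, since its eigendirection is the $e^\perp$-derivative.)

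However, the construction of $\tilde u^*$ has a fatal flaw: the chord-wise affinization is \emph{discontinuous} across the radial faces $\{|s|=r\}$. Fix $|s|<r$ and $|t|<\rho$; strict convexity of $t\mapsto u^*(p_0+se+te^\perp)$ forces the chord interpolant to lie \emph{strictly} above $u^*$ there, with a gap bounded away from zero on compact sub-slices. As $s\uparrow r$ the inside value of $\tilde u^*$ at such a $t$ therefore stays strictly above $u^*(p_0+re+te^\perp)$, while the outside value (for $s>r$) is exactly $u^*(p_0+se+te^\perp)$. So $\tilde u^*$ jumps downward across $\{s=r\}$, and likewise across $\{s=-r\}$. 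A discontinuous function is not convex, hence not in $C_M^*$, so no comparison of $J^*$-values follows, regardless of how small $\rho$ is relative to $r$. The ``coordinate check of one-sided directional derivatives'' you invoke cannot repair this, because there is no one-sided derivative to compare across a jump. Even if one modified the construction so as to paste continuously and tried to argue convexity at the radial faces, one would need $t\mapsto\partial_s u^*(p_0+se+te^\perp)$ to be concave on $[-\rho,\rho]$ at $s=r$ and convex at $s=-r$, i.e.\ opposite signs of $\partial_s\partial_t^2 u^*$ at $s=\pm r$, which fails for a generic $C^2$ strictly convex $u^*$ once $r$ is small. Finally, the mass-balance $F_0(\overline S|u^*)=F_0(\overline S|\tilde u^*)$ is a purely global identity (both total $L^2(\Omega)$ and the measures agree on the open complement of $\overline S$), not a slice-by-slice one, so the step of ``integrating the slicewise strict inequality over $s$'' needs its own justification. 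The missing ingredient throughout is a competitor that is actually convex; the paper sidesteps this entirely via the second-variation (Legendre) test.
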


\begin{proof} 

Suppose the contrary. Let $\det u''(x)>0$ for a point $x\in\omega$. We assume by reducing $\omega$ that $\det u'' > 0$ on $\omega$ and on its neighborhood. Put $\omega^*=\{u'(x)|x\in\omega\}\subset \R^{n*}$. Using the results of Sec. \ref{sec:Legandre_transform} we have

\[
	\int_\omega \frac{1}{1+|u'(x)|^2}dx = \int_{\omega^*}\frac{1}{1+|p|^2} F_0(dp|u^*).
\]

Obviously $(u^*)'' = (u'')^{-1}$ and $\det (u^*)''>0$ on $\omega^*$. Thus $\omega^*\cap\{|p|<1\}=\emptyset$ by Theorem \ref{thm:grad_ge_1}.

Since $u^*\in C^2(\omega^*)$, using \eqref{eq:F_j_S_j} we have $F_0(dp|u^*)=\det (u^*)''\,dp_1\wedge dp_2=du^*_{p_1}\wedge du^*_{p_2}$ (where $p=(p_1,p_2)\in\R^{2*}$). Consequently,

\[
	\int_{\omega^*}\frac{1}{1+|p|^2} F_0(dp|u^*) = \int_{\omega^*}\frac{1}{1+|p|^2} du^*_{p_1}\wedge du^*_{p_2}.
\]

Let us integrate the last integral by parts. First, (recall that $f(p)=\frac{1}{1+|p|^2}$),

\[
	2f\,du^*_{p_1}\wedge du^*_{p_2} = \big( 
		f_{p_1}\, du^*_{p_2} - f_{p_2}\, du^*_{p_1}
	\big)\wedge du^* + 
	d\big(fu^*_{p_1}\,du^*_{p_2} - fu^*_{p_2}\,du^*_{p_1} \big),
\]

\noindent Second, the first term of the previous sum can be changed as follows:

\[
	f_{p_1}\, du^*_{p_2} - f_{p_2}\, du^*_{p_1} = 
	u^*_{p_1}\,df_{p_2} - u^*_{p_2}\,df_{p_1} + d(f_{p_1}u^*_{p_2}-f_{p_2}u^*_{p_1}).
\]

\noindent So by the Stokes--Poincar\'e formula, we have

\begin{multline*}
	2\int_{\omega^*} f\,du^*_{p_1}\wedge du^*_{p_2} = 
	\int_{\omega^*} \big(
		u^*_{p_1}\,df_{p_2}\wedge du^* - u^*_{p_2}\,df_{p_1}\wedge du^*
	\big) +\\
	+\int_{\partial\omega^*} \big(
		fu^*_{p_1}\,du^*_{p_2} - fu^*_{p_2}\,du^*_{p_1} + (f_{p_1}u^*_{p_2}-f_{p_2}u^*_{p_1}) du^*
	\big).
\end{multline*}

The main idea of the proof is described in the following. If $\det u''>0$ on $\omega$ then $\det (u^*)'' > 0$ on $\omega^*$. So small variations (in the space\footnote{Two times differentiable functions with compact support.} $C^2_c(\omega^*)$) of $u^*$ change neither convexity of $u^*$ nor convexity of $u=u^{**}$. The conditions $0<u|_{\omega}<M$ also remain untouched as $u=(u^*)'$. Consequently, the function $u^*$ is a weak local minimum in the minimization problem for the following multiple integral:

\begin{equation}
\label{eq:min_problem_after_int_by_parts}
	\int_{\omega^*} \big(
		v_{p_1}\,df_{p_2}\wedge dv - v_{p_2}\,df_{p_1}\wedge dv
	\big) \to\min
\end{equation}

\noindent with the following terminal constraint $v|_{\partial\omega^*} = u^*|_{\partial\omega^*}$.

Now we arrive at a contradiction by verifying that problem \eqref{eq:min_problem_after_int_by_parts} has no local minima. We claim that the Legendre condition is not fulfilled for \eqref{eq:min_problem_after_int_by_parts}. Indeed, problem \eqref{eq:min_problem_after_int_by_parts} is quadratic with respect to the derivatives $v'$:

\[
	v_{p_1}\,df_{p_2}\wedge dv - v_{p_2}\,df_{p_1}\wedge dv = 
	(v_{p_1}\ v_{p_2})
	\left(\begin{array}{rr}
		-f_{p_2p_2} &  f_{p_1p_2}\\
		 f_{p_1p_2} & -f_{p_1p_1}
	\end{array}\right)
	\left(\begin{array}{c}
		v_{p_1}\\
		v_{p_1}\\
	\end{array}\right)\,dp_1\wedge dp_2.
\]

\noindent The previous matrix has the following eigenvalues:

\[
	\lambda_1 = \frac{2}{(1+|p|^2)^2}
	\quad\mbox{and}\quad
	\lambda_2 = \frac{2}{(1+|p|^2)^3} (1-3|p|^2).
\]

\noindent Since we know that $|p|\ge1$ for all $p\in\omega^*$, we immediately have $\lambda_2<0$ and the Legendre condition is not fulfilled for problem \eqref{eq:min_problem_after_int_by_parts}.

\end{proof}

\section{Maxwell's stratum}
\label{sec:maxwell}

Let $n=2$ and $\Omega=\{|x|\le1\}$ in the present and the last sections. Here we study the class $C_M^m$ of convex bodies $u$ in $C_M$, $u|_{\partial\Omega}=M$, that are symmetric with respect to a vertical plane and have a smooth boundary outside the plane of symmetry. An optimal solution in problem \eqref{problem:start} cannot have any strictly convex smooth parts on its boundary by Theorem \ref{thm:developable_surface}. Thus, any smooth part of the boundary should be a developable surface and an optimal solution should be the convex hull of the unit circle $\{(x,M):|x|\le 1\}$ and a convex curve lying in the plane of symmetry.

Usually in the calculus of variations the term ``Maxwell's stratum'' means a set at whose points two extremals with the same value of a functional meet. Both of the extremals lose their optimality after intersection with the Maxwell stratum. If there is a symmetry, then the Maxwell stratum appears naturally when an extremal intersects its own image. We consider height in the aerodynamic Newton problem as an analog of a functional in the calculus of variations and the generating lines of developable surfaces as extremals. Thus, if a convex body is symmetric with respect to a vertical plane and is smooth everywhere (except for points on the plane), then the generating lines of two symmetrical developable surfaces intersects at points of this plane. Having in mind the above analogy, we will use the term ``Maxwell's stratum'' for the intersection of the boundary of a symmetric convex body with its plane of symmetry.

Let us now give the precise definition of the class $C_M^m$ in terms of convex analysis. Denote by $\delta_\Omega$ the indicator function of the set $\Omega$:

\begin{figure}
  \begin{center}
    \includegraphics[width=0.38\textwidth]{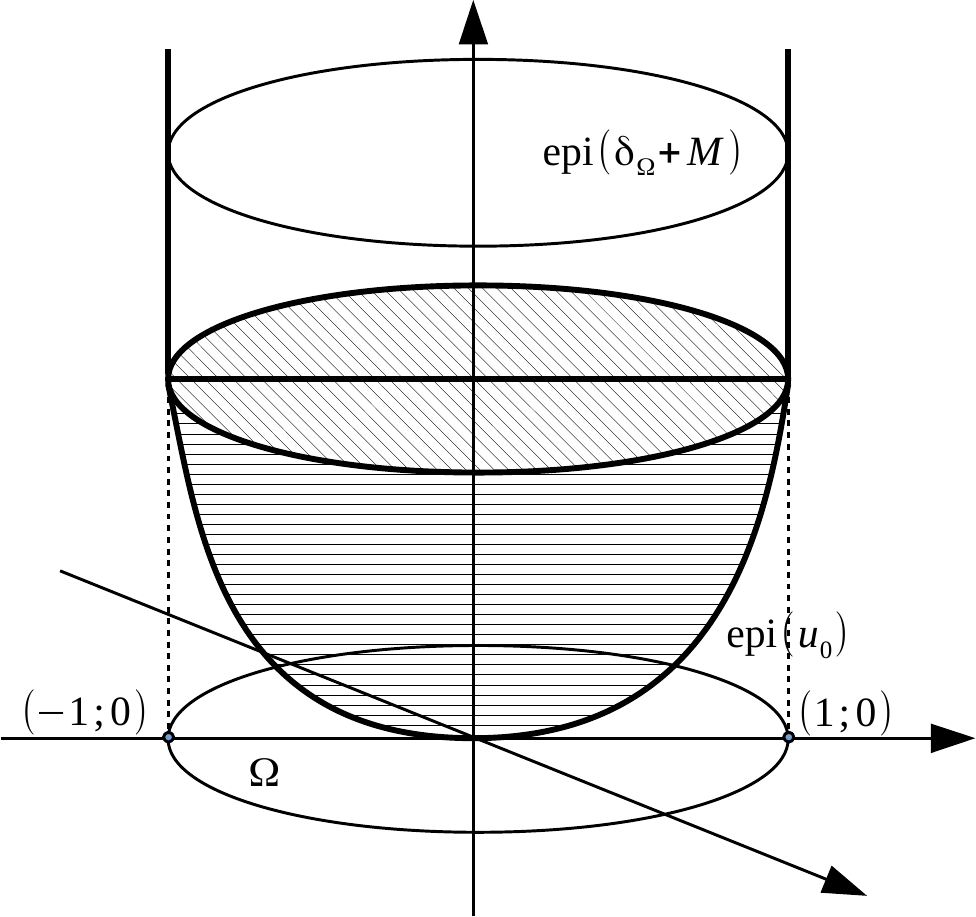}
  \end{center}
  \caption{The epigraph of the function $u = (\delta_\Omega + M)\vee u_0$ is the convex hull of the union of the epigraphs $\epi(\delta_\Omega + M)$ and $\epi u_0$.}
  \label{fig:maxwell}
\end{figure}

\[
	\delta_\Omega (x)= \left\{\begin{array}{ll}
		0&\mbox{if }x\in\Omega;\\
		+\infty&\mbox{if }x\not\in\Omega.
	\end{array}\right.
\]

\noindent If $v_1$ and $v_2$ are convex functions, then we denote by $v_1\vee v_2$ the following convex function:

\[
	(v_1\vee v_2)(x) = \inf\{(1-\lambda)v_1(x_1) + \lambda v_2(x_2)|(1-\lambda)x_1 + \lambda x_2 = x\mbox{ and }0\le\lambda\le 1\}.
\]

\noindent Note that if the convex functions $v_1$ and $v_2$ are closed and have compact effective domains, then the convex function $v_1\vee v_2$ is also closed, has compact effective domain, and $\epi (v_1 \vee v_2) = \conv(\epi v_1 \cup \epi v_2)$).

Let the vertical plane containing the line $\{x_2=0\}$ be the plane of symmetry of the body. Then the class $C_M^m$ can be described in the following way (see Fig. \ref{fig:maxwell}):

\[
	C_M^m = \{u\in C_M: u = (\delta_\Omega + M)\vee u_0\}\subset C_M,
\]

\noindent where the Maxwell stratum $u_0$ is a convex function equal to $+\infty$ outside the segment $[-1;1]$ on the line $\{x_2=0\}$ and $0\le u_0(x_1,0)\le M$ for $x_1\in[-1;1]$. We have $\min u_0=0$ by the statement of problem \eqref{problem:start} 

\begin{remark}
	Let us remark that if $M<1$, then the minimum of the functional $J$ in $C_M$ does not belong to $C_M^m$ by Remark \ref{rm:non_empty_heel}, since any optimal solution $u\in C_M$ should have the front part with nonempty interior in this case. However, for $M\ge1$, the class $C_M^m$ is of great interest. For example, in \cite{LachandPolygon}, minima of $J$ for all $M$ in the class $C_M^h$ of convex bodies with smooth side boundaries were found, and if $M>1.17953\ldots$ then any minimum in $C_M^h$ belongs to $C_M^m$. So
	
	\[
		\inf_{u\in C_M^m} J(u) \le \inf_{u\in C_M^h} J(u)
		\quad\mbox{if}\quad
		M>1.17953\ldots
	\]

\end{remark}

Using Theorems \ref{thm:pointwise_limit} and \ref{thm:C_M_compact}, it is easy to see that the functional $J$ reaches its minimum on $C_M^m$. Indeed, if $u_k\in C_M^m$ goes pointwise on $\Int\Omega$ to $u\in C_M$, then $u$ also belongs to $C_M^m$. A very good numerical results for the class $C_M^m$ (and many others) can be found in \cite{Wachsmuth}.

In this section, we will construct  an Euler-Lagrange equation for the convex conjugate function to $u_0$. Namely,

\begin{thm}
\label{thm:maxwell}
	Let $u$ be an optimal solution in $C_M^m$, $u=(\delta_\Omega+M)\vee u_0$. Suppose the function $u_0(x_1,0)$ is continuously twice differentiable on an interval $x_1\in[\alpha;\beta]$, $-1\le\alpha<\beta\le1$. If $\frac{\partial^2}{\partial x_1^2}u_0>0$ on $]\alpha;\beta[$, then the function $v(p_1)=u_0^*(p_1,0)+M$ satisfies on $]u_0'(\alpha);u_0'(\beta)[$ the following equation that does not depend on the parameter $M$:
	
	\[
		v'' = \frac{v-p_1 v'}{p_1^2-v^2}+
		\frac{2 v v'^2}{v^2+1}+\frac{v (v'^2-1)}{2 (p_1^2-v^2)}.
	\]
\end{thm}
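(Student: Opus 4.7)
First, I would make $u^*$ explicit. Because $u=(\delta_\Omega+M)\vee u_0$ is an epi-convex-hull, its Legendre transform is the \emph{pointwise maximum} of the two individual transforms: $u^*=\max\bigl((\delta_\Omega+M)^*,u_0^*\bigr)$. Since $\Omega$ is the unit disk, $(\delta_\Omega+M)^*(p)=|p|-M$; and since the effective domain of $u_0$ lies in the axis $\{x_2=0\}$, the function $u_0^*(p_1,p_2)$ depends on $p_1$ only and, by the very definition of $v$, equals $v(p_1)-M$. Hence
\[
u^*(p_1,p_2)=\max\bigl(|p|,\,v(p_1)\bigr)-M.
\]
This splits $\R^{2*}$ into two open regions: $A=\{|p|<v(p_1)\}$, on which $u^*=v-M$ has the rank-one Hessian $\mathrm{diag}(v'',0)$; and $B=\{|p|>v(p_1)\}$, on which $u^*=|p|-M$ has the rank-one Hessian tangent to spheres. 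Both have $\det(u^*)''=0$, so by \eqref{eq:F_j_S_j} the Hessian measure $F_0(\,\cdot\,|u^*)$ is concentrated on the ridge $\partial A=\{|p|=v(p_1)\}$, apart from contributions at the origin and corners which a $C_c^\infty$-variation of $v$ inside $]u_0'(\alpha);u_0'(\beta)[$ leaves untouched.

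Second, I would compute the density of $F_0$ on the upper ridge $\partial A^+=\{(p_1,\sqrt{v^2-p_1^2}):v(p_1)>|p_1|\}$. At such a point, $\partial u^*(p)$ is the closed segment joining the two one-sided values of $\nabla u^*$, namely $a=(v'(p_1),0)$ (from $A$) and $b=p/v(p_1)$ (from $B$); this segment is exactly the generating line of the side developable surface of $\epi u$ passing through the Maxwell point $\bigl(v'(p_1),0,u_0(v'(p_1))\bigr)$. Applying \eqref{eq:F_0_and_L_n} to the parameterization $\sigma(p_1,t)=(1-t)a+tb$ and computing the Jacobian determinant, and noting that $b'(p_1)$ is proportional to the tangent to the unit circle (which simplifies the computation sharply), yields
\[
F_0(dp\,|\,u^*)\big|_{\partial A^+}=\biggl(\frac{v''\sqrt{v^2-p_1^2}}{2v}+\frac{(v-p_1v')^2}{2v^2\sqrt{v^2-p_1^2}}\biggr)\,dp_1.
\]
Doubling for the symmetric ridge $\partial A^-$ and using $1/(1+|p|^2)=1/(1+v^2)$ on $\partial A$ reduces the functional to $J^*(u^*)=\int\mathcal L(p_1,v,v',v'')\,dp_1+C$, where $C$ collects contributions from outside $]u_0'(\alpha);u_0'(\beta)[$ and
\[
\mathcal L=\frac{1}{1+v^2}\biggl(\frac{v''\sqrt{v^2-p_1^2}}{v}+\frac{(v-p_1v')^2}{v^2\sqrt{v^2-p_1^2}}\biggr).
\]

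Third, because strict convexity of $u_0$ on $(\alpha;\beta)$ and the strict bounds $0<u_0<M$ there are open conditions in the $C^2$-topology, sufficiently small $C_c^\infty$-variations of $v$ supported in $]u_0'(\alpha);u_0'(\beta)[$ keep the perturbed function inside $C_M^m$ and leave $C$ unchanged; by the optimality of $u$, the function $v$ is therefore a critical point of $\int\mathcal L\,dp_1$. Since $\mathcal L$ is affine in $v''$, the full second-order Euler--Lagrange equation $\mathcal L_v-\tfrac{d}{dp_1}\mathcal L_{v'}+\tfrac{d^2}{dp_1^2}\mathcal L_{v''}=0$ has its $v'''$- and $(v'')^2$-terms cancel, leaving a genuine second-order ODE of the form $v''=F(p_1,v,v')$. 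The main obstacle is purely the algebraic simplification of $F$: after computing the required partials of the two coefficients $\frac{\sqrt{v^2-p_1^2}}{v(1+v^2)}$ and $\frac{(v-p_1v')^2}{v^2(1+v^2)\sqrt{v^2-p_1^2}}$, reducing over the common denominator $2v^2(v^2-p_1^2)(1+v^2)$, and using $p_1^2-v^2=-(v^2-p_1^2)$, a sequence of cancellations produces precisely the three-term formula claimed in the theorem. The independence on $M$ is automatic, since $M$ enters $u^*$ only as an additive constant and is annihilated by the Hessian operation underlying $F_0$.
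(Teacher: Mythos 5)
Your proof is correct and follows essentially the same route as the paper's: Legendre duality to get $u^*=\max(|p|,v(p_1))-M$, concentration of $F_0$ on the ridge $\{|p|=v(p_1)\}$, an explicit density there obtained from a Jacobian computation on the parameterization $\sigma(p_1,t)=(1-t)a+tb$ (this is exactly how Lemma~\ref{lm:F2_max} is derived), and a local variational argument yielding an Euler--Lagrange ODE; the density you write is indeed what Lemma~\ref{lm:F2_max} gives after substituting $v\stackrel{\gamma}{=}|p|$. The only differences are bookkeeping: the paper first integrates the $v''$-term by parts and then applies the first-order Euler--Lagrange equation to the resulting Lagrangian in $(p_1,v,v')$, whereas you keep $\mathcal L$ affine in $v''$ and invoke the second-order Euler--Lagrange equation whose $v'''$ and $(v'')^2$ terms cancel — equivalent; and you implicitly assume the strict bounds $0<u_0<M$ hold throughout $]\alpha;\beta[$, while the paper notes that $u_0(\cdot,0)$ may touch $0$ at one interior point $x_0$ and runs the argument separately on $]\alpha;x_0[$ and $]x_0;\beta[$, with the ODE extending across $p_1=u_0'(x_0)$ by continuity.
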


We will consider pass to the dual space and use Hessian measures to prove the theorem. Let

\[
	\epi(v_1\wedge v_2) \eqdef \epi v_1\cap \epi v_2
	\quad\Leftrightarrow\quad
	(v_1\wedge v_2)(x) \eqdef \max\{v_1(x),v_2(x)\}.
\]

\noindent It is well known than $(v_1\vee v_2)^* = v_1^*\wedge v_2^*$ (for any proper convex functions $v_1$ and $v_2$; see \cite[chapter 3, \S 5, Theorem 16.5]{Rockafellar}).

As $\Omega=\{|x|\le 1\}$ we have

\begin{equation}
\label{eq:max_p_u_1}
	u^*(p) = \max\{|p|-M,u_0^*(p)\}
\end{equation}

Since the function $u_0$ equals $+\infty$ outside the line $\{x_2=0\}$, the function $u_0^*(p)$ depends only on $p_1$ and does not depend on $p_2$. So, by theorem~\ref{thm:Legandre_transform}, $u_0^*(0)=0$, and if $|p|-M < u_0^*(p)$, then $u_0^*(p)\ge|p|-M$ and $|\partial_{p_1}u_0^*(p)|\le 1$.

Let us calculate the resistance $J$ by Theorem \ref{thm:Legandre_transform}. We start with the calculation of the Hessian measures:

\begin{lemma}
\label{lm:F2_max}
	Let $v_0$ and $v_1$ be convex functions on $\R^{2*}$. Then if $v_0(p)>v_1(p)$ then $F_0(\,\cdot\,|v_0\wedge v_1)=F_0(\,\cdot\,|v_0)$ in a neighborhood of $p$. Similarly, if $v_0(p)<v_1(p)$, then $F_0(\,\cdot\,|v_0\wedge v_1)=F_0(\,\cdot\,|v_1)$ in a neighborhood of $p$. Finally, if $v_0(p)=v_1(p)$ on a (Lipschitz) curve $\gamma$, and $v_0,v_1\in C^2$ in a neighborhood of $\gamma$, then the Hessian measure $F_0$ of $v_0\wedge v_1$ on $\gamma$ is given by the following formula:
	
	\[
		F_0(dp|v_0\wedge v_1)|_{\gamma} = \frac12 (v_0''+v_1'')[A_{\frac\pi2}(v_1'-v_0'),dp],
		\quad
		A_\theta=\left(\begin{array}{rr}
			\cos\theta&-\sin\theta\\
			\sin\theta& \cos\theta
		\end{array}\right),
	\]

	\noindent (the direction of motion on $\gamma$ must be taken so that the domain $v_0<v_1$ is on the right of $\gamma$, and the domain $v_0>v_1$ is on the left).
\end{lemma}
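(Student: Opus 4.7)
The first two claims follow immediately from locality of $F_0$: both its definition through the subdifferential $q \mapsto \partial(\cdot)(q)$ and formula~\eqref{eq:F_0_and_L_n} depend only on the values of the function on an arbitrarily small neighborhood. Since $v_0(p) > v_1(p)$ implies $v_0 > v_1$ in a neighborhood of~$p$, we have $v_0 \wedge v_1 = v_0$ pointwise there, hence $F_0(\,\cdot\,|v_0\wedge v_1)=F_0(\,\cdot\,|v_0)$ on that neighborhood; the opposite case is symmetric.

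For the third case, the plan is to use~\eqref{eq:F_0_and_L_n} directly. The key observation is that at any $p \in \gamma$ where both $v_i \in C^2$ with $v_0(p) = v_1(p)$ and $v_0'(p) \ne v_1'(p)$, the subdifferential of $v_0 \wedge v_1 = \max(v_0, v_1)$ is the segment $[v_0'(p), v_1'(p)]$; off $\gamma$, it reduces to the single point $v_i'(p)$ for the locally dominant~$i$. Hence
\[
	\bigcup_{p \in \eta} \partial(v_0 \wedge v_1)(p) = v_0'\bigl(\eta \cap \{v_0 > v_1\}\bigr) \cup v_1'\bigl(\eta \cap \{v_0 < v_1\}\bigr) \cup S(\eta),
\]
where $S(\eta) = \bigcup_{p \in \eta \cap \gamma} [v_0'(p), v_1'(p)]$ is the ``ribbon'' swept by the gradient segments. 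The first two pieces contribute the absolutely continuous Monge--Amp\`ere densities $\det v_0''$ and $\det v_1''$ on the respective sides and carry no mass on the 2D-null set $\gamma$; the singular part of $F_0$ on $\gamma$ therefore equals $L^2(S(\eta))$, and this is what I will identify with the stated density.

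To evaluate $L^2(S(\eta))$, I parameterize $\gamma$ by a $C^1$ map $t \mapsto p(t)$ and consider
\[
	\Phi(t, \lambda) = (1-\lambda)\,v_0'(p(t)) + \lambda\, v_1'(p(t)), \qquad \lambda \in [0,1].
\]
Direct differentiation gives $\partial_\lambda \Phi = v_1'(p) - v_0'(p)$ and $\partial_t \Phi = [(1-\lambda)v_0''(p) + \lambda v_1''(p)]\dot p$, so the Jacobian is $\det[\partial_t\Phi,\partial_\lambda\Phi] = \bigl\langle A_{\pi/2}[(1-\lambda)v_0'' + \lambda v_1'']\dot p,\, v_1' - v_0'\bigr\rangle$. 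Integrating $\lambda \in [0,1]$ replaces $(1-\lambda)v_0'' + \lambda v_1''$ by its mean $\tfrac12 (v_0'' + v_1'')$; using symmetry of the Hessians and antisymmetry of $A_{\pi/2}$, the integrand rewrites as $\tfrac12 (v_0'' + v_1'')[A_{\pi/2}(v_1' - v_0'),\, \dot p]\,dt$, which is the stated formula.

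The main subtlety is justifying the change of variables $L^2(S(\eta)) = \int_{\eta\cap\gamma}\int_0^1|\det\Phi'|\,d\lambda\,dt$, i.e.\ local injectivity of $\Phi$. This holds because $v_1' - v_0'$ is normal to $\gamma$ (the function $v_0 - v_1$ vanishes identically on $\gamma$, so its tangential derivative vanishes) and nonzero by hypothesis, so the ribbon has positive transverse width and the inverse function theorem applies. The only remaining bookkeeping is the sign: the orientation convention stipulated in the statement (traverse $\gamma$ with $\{v_0 < v_1\}$ on the right) is precisely the one that aligns $A_{\pi/2}(v_1'-v_0')$ consistently with $\dot p$, so that the bilinear form $\tfrac12(v_0''+v_1'')[\,\cdot\,,\,\cdot\,]$ produces a nonnegative density. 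This orientation check is the only delicate part of the argument.
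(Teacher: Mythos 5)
Your proof is correct and follows essentially the same computation as the paper, differing only in the starting formula for $F_0$. The paper works directly from the Steiner-type polynomial $L^2(\eta^\varepsilon)=\sum_j\binom{2}{j}F_{2-j}\varepsilon^j$: it parametrizes $\eta^\varepsilon=\{p(s)+\varepsilon[(1-t)v_0'+tv_1']\}$, computes the area as a polynomial in $\varepsilon$, and reads off $F_0$ as the $\varepsilon^2$ coefficient (the $\varepsilon^1$ coefficient gives $F_1$ for free). You instead invoke the identity~\eqref{eq:F_0_and_L_n}, $F_0(\eta|u^*)=L^2\bigl(\bigcup_{p\in\eta}\partial u^*(p)\bigr)$, parametrize the ribbon $S(\eta)$ by $(t,\lambda)$, and compute its area directly. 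Both routes reduce to the same Jacobian with the same $(1-\lambda)v_0''+\lambda v_1''$ structure, so the arithmetic is the same; yours is marginally more direct because it extracts $F_0$ alone instead of the whole polynomial, while the paper's stays closer to the defining Steiner formula. Your handling of the first two (locality) claims and the injectivity of $\Phi$ via the normality of $v_1'-v_0'$ to $\gamma$ is fine and matches the paper's implicit argument.

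One small algebraic slip worth flagging: from $\det[\partial_t\Phi\mid\partial_\lambda\Phi]=\langle A_{\pi/2}H\dot p,\,v_1'-v_0'\rangle$ with $H$ symmetric, the identity is $\langle A_{\pi/2}H\dot p,\,b\rangle=-H[A_{\pi/2}b,\,\dot p]$ (because $A_{\pi/2}^T=-A_{\pi/2}$), not $+H[A_{\pi/2}b,\,\dot p]$ as you state. This sign disappears when you take $|\det|$ in the area formula, so your final answer is right, but the assertion that the stated orientation makes the raw (signed) bilinear form nonnegative along $\gamma$ is actually off by a sign in your bookkeeping; the absolute value is doing the work for you.
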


\begin{proof}
	Let us parametrize $\gamma$ by a parameter $s$, $p(s)\in\gamma$ (the parameter $s$ should be taken from the statement of the lemma). Let $\eta=\{p(s):s\in[s_0,s_1]\}$ be an arc of $\gamma$. Then
	
	\[
		\eta^\varepsilon = \Big\{
			p(s)+\varepsilon\big((1-t)v'_0(p(s)) + tv'_1(p(s))\big),
			s\in[s_0,s_1], t\in[0;1]
		\Big\}.
	\]

	The above formula gives us the coordinates $(t,s)$ on $\eta^\varepsilon$ (the bijectivity of the map $[0;1] \times [s_0,s_1]\to\eta^\varepsilon$ follows from the convexity of $v_0$ and $v_1$). So
	
	\begin{multline*}
		L^2(\eta^\varepsilon) = 
		\int_{s_0}^{s_1}\int_0^1 \det
		\Big[\varepsilon(v'_1-v'_0)\quad\dot p + \varepsilon (1-t)v''_0\dot p + \varepsilon tv''_1\dot p
		\Big]dt\wedge ds =\\
		=\int_{s_0}^{s_1}\int_0^1
		\big\langle 
			\varepsilon A_{\frac\pi2}(v'_1-v'_0),
			\dot p + \varepsilon (1-t)v''_0\dot p + \varepsilon tv''_1\dot p
		\big\rangle\,dt\wedge ds = \\
		=\int_{s_0}^{s_1} \Big(
			\varepsilon\langle A_{\frac\pi2}(v'_1-v'_0),\dot p\rangle +
			\frac12\varepsilon^2(v''_0+v''_1)[A_{\frac\pi2}(v'_1-v'_0),\dot p]
		\Big)\,ds
	\end{multline*}

	\noindent Thus, $F_1(dp|v_0\wedge v_1) = \frac12\langle A_{\frac\pi2}(v'_0-v'_1),d p\rangle$ on $\gamma$, and the measure $F_0$ has the form given in the statement of the lemma.
\end{proof}

Let us now prove Theorem \ref{thm:maxwell}.

\begin{proof}[Proof of Theorem \ref{thm:maxwell}] We start with the calculation of the measure $F_0$ for \eqref{eq:max_p_u_1}. Note that the Hessian measure $F_0$ is concentrated at 0  for the function $|p|$ and is identically 0 for the function $u_0^*$. So the measure $F_0(\,\cdot\,|u^*)$ for $u^*=(|p|-M)\wedge u_0^*$ is concentrated on the curve $\gamma$ given implicitly:

\[
	u_0^*(p) \stackrel{\gamma}{=} |p|-M.
\]

Let us choose the counterclockwise direction of motion on $\gamma$. Since $u_0^*(p)>|p|-M$ at $p=0$, we obtain $v_0=u_0^*$ and $v_1=|p|-M$ by using Lemma \ref{lm:F2_max} for $u^*$. Thus 

\[
	2F_0(dp|u^*) = ((u_0^*)'' + |p|'')[A_{\frac\pi2}(|p|'-(u_0^*)'),dp].
\]

So we need to calculate the second derivatives. Denote $v(p_1)\eqdef u_0^*(p)+M$. If $\xi,\eta\in\R^{2*}$, then $(u^*_0)'[\xi]=v'(p_1)\xi_1$ and $(u_0^*(p))''[\xi,\eta] = v''(p_1)\xi_1\eta_1$. Since

\[
	|p|'[\xi] = \frac{\langle p,\xi\rangle}{|p|}
	\quad\mbox{and}\quad
	|p|''[\xi,\eta] = \frac{\langle \xi,\eta\rangle}{|p|} - 
	\frac{\langle p,\xi\rangle\langle p,\eta\rangle}{|p|^3},
\]

\noindent by using $v\stackrel{\gamma}{=}|p|$, we obtain

\begin{multline*}
	2F_0(dp|u^*)|_\gamma = \frac{p_1\,dp_2 - p_2\,dp_1}{|p|^2} - \frac{v'dp_2}{|p|}
	+\frac{p_2v'(p_1\,dp_1 + p_2\,dp_2)}{|p|^3} - \frac{p_2v''\,dp_1}{|p|} =\\
	=\frac{p_1\,dp_2 - p_2\,dp_1}{v^2}-\frac{p_1^2v'\,dp_2}{v^3} + 
	\frac{p_1p_2v'\,dp_1}{v^3}-\frac{p_2v''\,dp_1}{v}.
\end{multline*}

\noindent For the last term, we can write

\[
	\frac{p_2v''\,dp_1}{v} = \frac{p_2\,dv'}{v} = 
	d\left(\frac{p_2v'}{v}\right) - \frac{v'\,dp_2}{v} + \frac{p_2(v')^2\,dp_1}{v^2}
\]

\noindent So

\[
	2F_0(dp|u^*)|_\gamma = \frac{p_1\,dp_2 - p_2\,dp_1}{v^2} + \frac{p_2v'}{v^3}(p_1\,dp_1 + p_2\,dp_2) - \frac{p_2v'^2\,dp_1}{v^2} - d\left(\frac{p_2v'}{v}\right)
\]

Since $v=|p|$ on $\gamma$, it follows that

\[
	p_1\,dp_1 + p_2\,dp_2 \stackrel{\gamma}{=}vv'\,dp_1
	\quad\mbox{and}\quad
	dp_2 = \frac{1}{p_2}(vv' - p_1)dp_1.
\]

\noindent Thus,

\[
	2F_0(dp|u^*)|_\gamma = \frac{p_1\,dp_2 - p_2\,dp_1}{v^2} - d\left(\frac{p_2v'}{v}\right) = 
	\frac{p_1v'-v}{p_2v}\,dp_1 - d\left(\frac{p_2v'}{v}\right).
\]

Hence, using theorem \ref{thm:Legandre_transform}, we obtain

\[
	J^*(u^*) = \frac12\int_\gamma \left[
		\frac{p_1v'-v}{p_2v(1+v^2)}\,dp_1 -
		\frac{1}{1+v^2}d\left(\frac{p_2v'}{v}\right)
	\right] 
\]

\noindent Let us now integrate the second term by parts:

\[
	J^*(u^*) = \frac12\int_\gamma \left[
		-\frac{2p_2(v')^2}{(1+v^2)^2} + \frac{p_1v'-v}{p_2v(1+v^2)}
	\right]\,dp_1
\]

\noindent (there are no terminal terms because $\gamma$ is a closed loop).

It remains to note that the curve $\gamma$ consists of two symmetrical arcs. The coordinate $p_2$ is positive, $p_2>0$, and $p_1$ goes backwards on the first arc. The situation on the second arc is opposite: the coordinate $p_2$ is negative, $p_2<0$, and $p_1$ goes forward. Since $p_2^2 \stackrel{\gamma}{=} v^2-p_1^2$, we have

\begin{equation}
\label{eq:J_star_maxwell}
	J^*(u^*) = \int_{A^*}^{B^*}\left[
		\frac{2\sqrt{v^2-p_1^2}(v')^2}{(1+v^2)^2} - \frac{p_1v'-v}{\sqrt{v^2-p_1^2}v(1+v^2)}
	\right]\,dp_1,
\end{equation}

\noindent where $A^*=\frac{\partial}{\partial x_1}u_0(-1,0)$ and $B^*=\frac{\partial}{\partial x_1}u_0(1,0)$.

Let us consider the problem of minimizing the functional \eqref{eq:J_star_maxwell} on $v(p_1)$. Generally speaking, this problem is not a variational problem, because the small variations of $v=u_0^*+M$ can destroy both the convexity of $v$ (i.e. the convexity of $u_0$) and the conditions $v(0)=M$ (i.e. $\inf u_0=0$), $v(p_1)\ge |p_1|$ (i.e. $u_0\le M$ on the segment $[(-1;0),(1,0)]$) and $|v'|\le 1$ (i.e. $\dom u_0\subset [(-1;0),(1,0)]$). However, if the second derivative $u_0(x_1,0)$ with respect to $x_1$ on $]\alpha;\beta[$ is positive, then the function $u_0(\cdot,0)$ on $[\alpha;\beta]$ may be equal to its extreme values $0$ and $M$ no more than 3 times. Precisely it may turn out that $u_0(\alpha,0)=u_0(\beta,0)=M$ and $u_0(x_0,0)=0$ for a point $x_0\in[\alpha;\beta]$. In this case, the point $x_0$ divides $]\alpha;\beta[$ into two intervals $]\alpha;x_0[$ and $]x_0;\beta[$, which we denote $]\alpha_i;\beta_i[$, $i=1,2$. If $u(x,0)>0$ on $]\alpha;\beta[$, then we denote $]\alpha_1;\beta_1[=]\alpha,\beta[$. The rest of the proof is similar in both cases.

Take an index $i$. Let  $[\hat\alpha_i,\hat\beta_i]\subset\ ]\alpha_i,\beta_i[$. Then $0<u_0(x_1,0)<M$ on $[\hat\alpha_i,\hat\beta_i]$. Note that the union of the described segments $[\hat\alpha_i,\hat\beta_i]$ is $]\alpha_i;\beta_i[$. The small variations (in $C^2$) of the function $u_0(x_1,0)$, $x_1\in[\hat\alpha_i,\hat\beta_i]$, destroy neither the convexity condition nor the inequalities $0<u_0(x_1,0)<M$ nor the condition $\dom u_0\subset [(-1;0),(1,0)]$ (the latter condition is not destroyed by small variations in $C^2$, because  $[\hat\alpha_i,\hat\beta_i]\subset]-1;1[$).

Denote $\alpha^*=\frac{\partial}{\partial x_1}u_0(\hat\alpha_i,0)$ and $\beta^*=\frac{\partial}{\partial x_1}u_0(\hat\beta_i,0)$. Then the small variations of $v$ in $C^2[\alpha^*,\beta^*]$ do not destroy the above conditions on $u_0$. Consequently, $v$ is a weak local minimum (for small $C^2$ variations) in the following variational problem

\[
	\int_{\alpha^*}^{\beta^*}\left[
		\frac{2\sqrt{v^2-p_1^2}(v')^2}{(1+v^2)^2} - \frac{p_1v'-v}{\sqrt{v^2-p_1^2}v(1+v^2)}
	\right]\,dp_1\to\min_v
\]

\noindent with fixed ends $v(\alpha^*)$ and $v(\beta^*)$. Note that the Legendre condition is automatically fulfilled, and thus extremals are locally optimal.

So we have a classical variational problem and $v$ is its local minimum. Thus the function $v$ satisfies the Euler--Lagrange equation on $[\alpha^*,\beta^*]$. Since $v>|p|$ on $[\alpha^*,\beta^*]$ we obtain the equation formulated in the statement of the theorem by a straightforward computation.

Finally, recall that $\alpha^*=\frac{\partial}{\partial x_1}u_0(\hat\alpha_i,0)$ and $\beta^*=\frac{\partial}{\partial x_1}u_0(\hat\beta_i,0)$, where $[\hat\alpha_i;\hat\beta_i]\subset]\alpha_i;\beta_i[$ is arbitrary.

\end{proof}

\section{The front part}
\label{sec:heel}

\begin{figure}
  \begin{center}
    \includegraphics[width=0.38\textwidth]{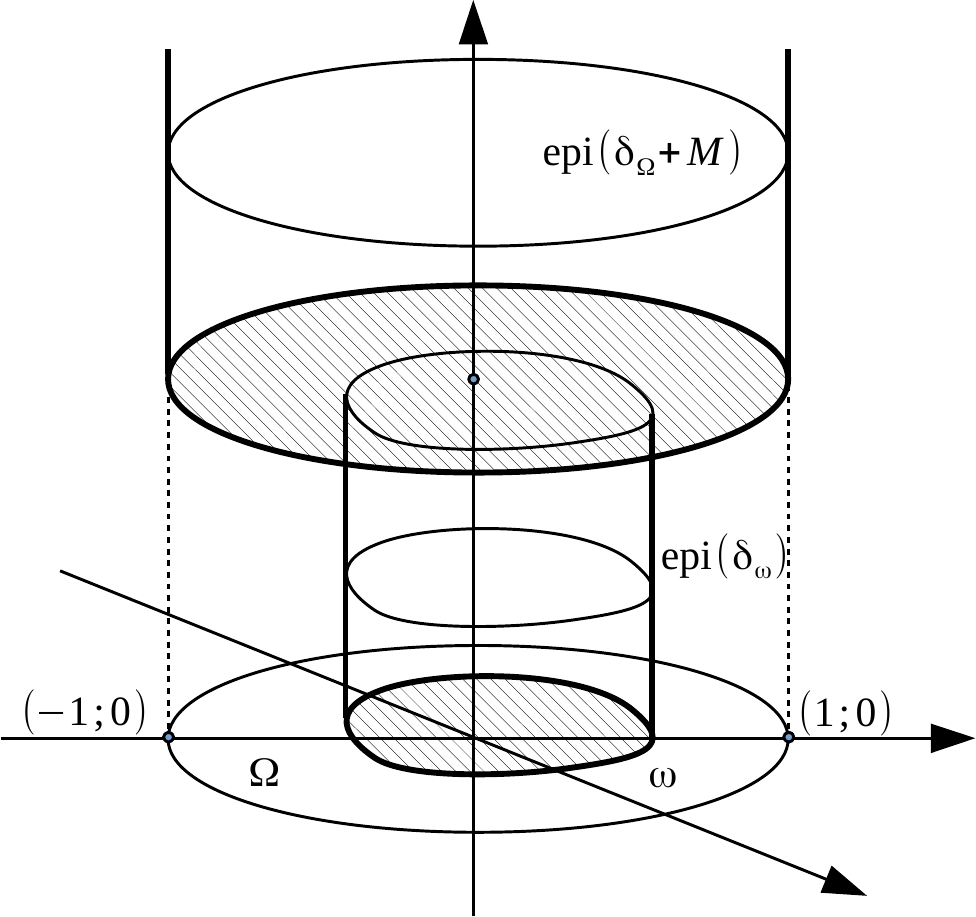}
  \end{center}
  \caption{Epigraph of the function $u = (\delta_\Omega + M)\vee \delta_\omega$ is a convex hull of the union of the epigraphs $\delta_\Omega + M$ and $\delta_\omega$.}
  \label{fig:heel}
\end{figure}

Let $n=2$. In this section, we will study the optimal form of bodies in the class $C_M^h\subset C_M$ which consists of convex bodies with smooth side boundary. Specifically  (see Fig.~\ref{fig:heel}):

\[
	C_M^h=\{u\in C_M: u=(\delta_\Omega+M)\vee \delta_\omega\},
\]

\noindent where $\omega\Subset \Int\Omega$ is a compact set. Note that the class $C_M^h$ were already studied in \cite{LachandPolygon}, where the following theorem was proved.

\begin{thm}[2001, Lachand-Robert, Peletier, \cite{LachandPolygon}]
\label{thm:heel}
	Let $u$ be an optimal solution in $C_M^h$, $u=(\delta_\Omega+M)\vee\delta_\omega$. If $\Omega\subset \R^2$ is the unit circle, then $\omega$ is a regular polygon (possibly, biangle) centered at the center of $\Omega$.
\end{thm}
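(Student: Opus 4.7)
I would pass to the dual via Theorem \ref{thm:Legandre_transform} and reduce to a one-dimensional variational problem on the support function of $\omega$, mirroring the treatment of the Maxwell stratum in \S\ref{sec:maxwell}. Using $(v_1\vee v_2)^* = \max\{v_1^*, v_2^*\}$ and $s_\Omega(p)=|p|$ for the unit disk,
\[
u^*(p) = \max\{|p|-M,\, s_\omega(p)\}.
\]
Since $\omega \Subset \Int\Omega$ forces $s_\omega(p) < |p|$ for every $p\ne 0$, the locus $\gamma = \{|p|-M = s_\omega(p)\}$ is a simple closed curve around the origin, with $u^* = s_\omega$ inside and $u^* = |p|-M$ outside.

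Both branches of the maximum are $1$-homogeneous, so $F_0(\cdot|u^*)$ splits into a point mass at the origin of weight $L^2(\partial u^*(0)) = L^2(\omega)$, plus a one-dimensional measure on $\gamma$ described by Lemma \ref{lm:F2_max} on smooth arcs of $\gamma$ (with atomic corrections along the kink rays of $s_\omega$ corresponding to vertices of $\omega$). Introducing the angular support function $\sigma(\phi) = s_\omega(\cos\phi, \sin\phi)$, the curve $\gamma$ takes the polar form $r(\phi) = M/(1-\sigma(\phi))$, and the resistance splits as
\[
J(u) = L^2(\omega) + \Phi[\sigma],
\]
for an explicit integral functional $\Phi$ of $\sigma$ and $\sigma'$ on $[0,2\pi]$, with $\sigma$ subject to the distributional convexity constraint $\sigma'' + \sigma \ge 0$.

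It then remains to minimize $\Phi[\sigma]$ under this constraint. In the spirit of Theorem \ref{thm:developable_surface}, I expect that on any arc where the absolutely continuous part of $\sigma''+\sigma$ is strictly positive — equivalently, $\partial\omega$ has a strictly convex smooth piece — a chord replacement for that arc decreases $J$ strictly. This forces $\sigma''+\sigma$ to be purely atomic, so $\omega$ is a convex polygon with vertex directions at the atoms. The rotational symmetry of $\Omega$ makes $\Phi$ invariant under $\phi$-translations, so for each fixed vertex number $N$ the set of $N$-gon optimizers is $S^1$-invariant; combined with strict convexity of $\Phi$ transversal to rotations and the first-order stationarity in translations of $\omega$ inside $\Omega$, the optimum must itself be rotationally symmetric, i.e.\ a regular $N$-gon (or a biangle if $N=2$) centered at $0$.

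The main obstacle is the ``no smooth arcs'' step. Theorem \ref{thm:developable_surface} rules out strict convexity on the \emph{side} of the body, but gives no information on the flat front $\{u=0\}$, where $\partial\omega$ lives. One must set up a direct chord-perturbation argument: cut off a small smooth convex arc of $\partial\omega$ by the segment joining its endpoints, check that the new body stays in $C_M^h$ (using $\omega \Subset \Int\Omega$ to preserve a developable, $C^2$ side surface), and verify strict negativity of the first variation from the explicit form of $\Phi[\sigma]$. This is the technically delicate step and translates the original argument of Lachand-Robert and Peletier \cite{LachandPolygon} into the dual Hessian-measure framework; the subsequent atomicity, symmetry, and centering conclusions then follow by comparatively standard moves.
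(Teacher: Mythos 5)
Your reduction matches the paper's: pass to the dual via Theorem~\ref{thm:Legandre_transform}, get $u^*=\max\{|p|-M,s_\omega\}$, split $F_0$ into the atom $L^2(\omega)$ at the origin plus the one-dimensional part on the level set $\gamma$, and work in polar coordinates with the angular support function. But the key ``no smooth strictly convex arcs'' step is handled quite differently. The paper does \emph{not} set up a chord-replacement/first-variation argument. Instead it pushes the explicit computation of $F_0|_\gamma$ (Lemma~\ref{lm:F2_max}, together with the polar second-derivative formulas) all the way to a closed-form one-dimensional integral
\[
J^*(u^*) = \tfrac12\int_0^{2\pi}\Big[v^2 + (1-v^2)f(r) - \big(1-f(r)+rf'(r)\big)\,v'^2\Big]\,d\theta,\qquad r=\tfrac{M}{1-v},
\]
and observes that the coefficient multiplying $v'^2$ is $\le 0$ once $r\ge1$ on $\gamma$. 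Then, exactly as in \S\ref{sec:lack_of_strict_convexity}, on any arc where $v+v''>0$ and $v\in C^2$, small $C^2$-variations of $v$ are admissible (they preserve convexity and the class $C^h_M$), so $v$ would have to be a weak local minimizer of the displayed functional; but the Legendre second-order necessary condition fails there. This is a purely local, pointwise argument: no chord construction, no sign analysis of a first variation, and no delicacy in preserving $C_M^h$ beyond the fact that small $C^2$-perturbations of a strictly convex arc remain convex. What your proposal buys is closer fidelity to the original Lachand-Robert--Peletier argument; what the paper buys is a much shorter route via the Legendre condition once the functional is written down explicitly.

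Two further points. First, your route implicitly needs, but does not flag, the fact that $\gamma$ lies outside the unit circle; the paper isolates this as Lemma~\ref{lm:r_ge_1_on_gamma} (a quick corollary of Theorem~\ref{thm:grad_ge_1}) because it is exactly what gives the Legendre coefficient the wrong sign. Second, your closing claim that ``the subsequent atomicity, symmetry, and centering conclusions then follow by comparatively standard moves'' overstates what the paper does: the paper explicitly says it re-proves only Part~1 (no smooth strictly convex arcs of $\partial\omega$) via Hessian measures and defers Part~2 (that among $N$-gons the regular centered one is optimal) to \cite{LachandPolygon}. That second part is not a routine symmetry argument; the paper does not attempt it, and your sketch of it (invariance plus ``strict convexity transversal to rotations'') would need substantial elaboration to be a proof.
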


It was shown in \cite{LachandPolygon} how the number of vertices of the optimal polygon and its size depend on the height $M$. The original proof of Theorem \ref{thm:heel} in \cite{LachandPolygon} may be divided in two semantic parts:

\begin{enumerate}
	\item The proof of the fact that the border of $\omega$ does not contain smooth strictly convex parts.
	\item The proof of the fact that a regular polygon with a given number of vertices is the best among all convex polygons with the same number of vertices.
\end{enumerate}

In the present section, we would like to propose a new proof of the first part by using Hessian measures; the new proof is much easier than the original one. The only thing we need is to make an absolutely straightforward computation.

So let us use Theorem \ref{thm:Legandre_transform} to prove the statement. In the dual space, we have

\[
	u^* = (\delta_\Omega+M)^* \wedge \delta_\omega^* = \max\{|p|-M,s_\omega\}.
\]

\noindent Thus, the support of the Hessian measure $F_0(\,\cdot\,|u^*)$ is the union of the origin and the curve $\gamma$, where the values of the functions $|p|-M$ and $s_\omega$ coincide. Note that $F_0(\,\cdot\,|u^*)$ on $\gamma$ determines the resistance of the side boundary of the body and $F_0(0|u^*)$ is the area of the front part $u^{-1}(0)$ and determines its resistance.

First, let us calculate the measure $F_0(\,\cdot\,|u^*)$ restricted to the curve $\gamma$. We shall use the key Lemma \ref{lm:F2_max}. So we need to compute the curve $\gamma$ where the functions $|p|-M$ and $\delta_\Omega$ coincide. Also we need to compute the first and second derivatives of the above-mentioned functions.

The functions $s_\omega(p)$ and $|p|$ are positively homogeneous. That's why we will make all calculations in polar coordinates $(r,\theta)$ on $\R^{2*}$:

\[
	v_0(p)=s_\omega(p) = rv(\theta)\quad\mbox{and}\quad v_1(p) = |p|-M = r-M.
\]

\noindent where $v(\theta)$ is a periodic Lipschitz function. Note that $\omega\Subset \Omega$, so $s_\omega(p)<s_\Omega(p)=|p|$ and, consequently, $v<1$.

The curve $\gamma$ is given in polar coordinates by the following equation:

\[
	r-M \stackrel{\gamma}{=} rv\quad\Rightarrow\quad r\stackrel{\gamma}{=}\frac{M}{1-v}.
\]

\noindent Therefore $dr=\frac{Mv'}{(1-v)^2}\,d\theta$ on $\gamma$. Thus,

\begin{equation}
\label{eq:heel_dp_gamma}
	\left(\begin{array}{c}
		dp_1\\
		dp_2
	\end{array}\right)=
	\left(\begin{array}{rr}
		\cos\theta&-r\sin\theta\\
		\sin\theta&r\cos\theta
	\end{array}\right)
	\left(\begin{array}{c}
		dr\\d\theta
	\end{array}\right)
	\stackrel{\gamma}{=}
	\frac{M}{1-v}A_\theta
	\left(\begin{array}{c}
		\frac{v'}{1-v}\\
		1
	\end{array}\right)
	d\theta.
\end{equation}

\begin{lemma}
\label{lm:r_ge_1_on_gamma}
	The curve $\gamma$ must lie outside the unit circle $\{|p|<1\}$.
\end{lemma}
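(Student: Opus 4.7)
The plan is to apply Theorem~\ref{thm:grad_ge_1} to the optimal $u^* \in C_M^*$ corresponding to $u \in C_M^h$, deduce that $u^* = \tilde u^*$, and read off the inequality $r \ge 1$ directly from the parametrization $r = M/(1-v(\theta))$ of $\gamma$ in polar coordinates.

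The preparatory step is to verify that the tilde construction preserves the subclass $C_M^h$, so that optimality within the smaller class (not just within $C_M$) lets us invoke Theorem~\ref{thm:grad_ge_1}. Writing $v(\theta) = s_\omega(e^{i\theta})$ and $g(\theta) = \max\{1-M,\,v(\theta)\}$, I let $\tilde\omega$ be the convex body whose support function is $g$; note that $v(\theta) < 1$ (since $\omega \Subset \Int\Omega$) and $M > 0$ give $g(\theta) < 1$, so $\tilde\omega \Subset \Int\Omega$. A ray-by-ray comparison of the two affine branches $r-M$ and $rv(\theta)$ of $u^*$ against the homogenized function $rg(\theta)$ then shows
\[
    \tilde u^*(p) = \max\{|p|-M,\ s_{\tilde\omega}(p)\},
\]
so $\tilde u \in C_M^h$ with front part $\tilde\omega \supset \omega$.

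With $\tilde u \in C_M^h$ in hand, Theorem~\ref{thm:grad_ge_1} gives $J^*(\tilde u^*) \le J^*(u^*)$; since $u$ is optimal in $C_M^h$, we must have $\tilde u^* \equiv u^*$, equivalently $s_{\tilde\omega} = s_\omega$, i.e.\ $g(\theta) = v(\theta)$ for all $\theta$. This is exactly the statement $v(\theta) \ge 1-M$. Substituting into the formula $r_\gamma(\theta) = M/(1-v(\theta))$ yields $r_\gamma(\theta) \ge 1$, which is the desired conclusion $\gamma \subset \{|p| \ge 1\}$.

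The main (and essentially only) obstacle is the preparatory step: one must confirm that the tilde operation of Theorem~\ref{thm:grad_ge_1} lands back in $C_M^h$ and not merely in the bigger class $C_M^*$. The check amounts to the elementary verification that $\max\{r-M, rv(\theta)\}$ and $\max\{r-M, rg(\theta)\}$ coincide on both sides of the circle $|p|=1$, using $g \ge 1-M$ inside the disk and $g = v$ in the case $v \ge 1-M$ outside it. Once this is done, the result follows immediately from the equality case of Theorem~\ref{thm:grad_ge_1} without any further computation.
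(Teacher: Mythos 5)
Your proof is correct and follows the same approach as the paper's: verify that the tilde operation of Theorem~\ref{thm:grad_ge_1} maps $C_M^h$ into itself by writing $\tilde u^* = \max\{|p|-M,\ s_{\tilde\omega}\}$ with $\tilde\omega \Subset \Omega$, invoke the equality case of Theorem~\ref{thm:grad_ge_1} together with optimality in $C_M^h$ to conclude $\tilde u^* \equiv u^*$, and read off $v(\theta) \ge 1-M$, hence $r = M/(1-v) \ge 1$ on $\gamma$. The only cosmetic difference is the final step: the paper deduces $v \ge 1-M$ from the positive homogeneity of $u^*$ on $\{|p|\le 1\}$, whereas you deduce it from $s_{\tilde\omega} = s_\omega$ (i.e.\ $g = v$); these are equivalent.
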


\begin{proof}
	Let us use Theorem \ref{thm:grad_ge_1}. The only thing we need to check is that $\tilde u\in C_M^h$. We have $\tilde u^*(p)=\max\{|p|-M,s_\omega(p),|p|u^*(p/|p|)\}$ by construction. Since the convex function $\max\{s_\omega(p),|p|u^*(p/|p|)\}$ is positively homogeneous, it is the support function of a set $\tilde\omega\subset \R^2$. Moreover, $\tilde\omega\Subset\Omega$ as $u^*(p)<|p|$ and $s_\omega(p)<|p|$. Then $\tilde u^* = \max\{|p|-M,s_{\tilde\omega}\}$ and $\tilde u\in C_M^h$. Consequently, $u^*=\tilde u^*$ by Theorem \ref{thm:grad_ge_1}.
	
	The function $u^*$ is positively homogeneous at $|p|\le 1$. Since $u^*(p)=s_\omega^*(p)$ for all $p$ small enough, it follows that $u(p)=s_\omega(p)$ for $|p|\le 1$. So $s_\omega(p)\ge |p|-M$ for $|p|\le 1$ and, for $|p|=1$, we have

	\[
		v\ge 1-M
		\quad\Rightarrow\quad
		r\stackrel{\gamma}{=}\frac{M}{1-v}\ge 1.
	\]

\end{proof}

The advantage of our approach is seen in the simplicity of the last proof. For example, the original proof of this inequality given in \cite{LachandPolygon} takes the whole section.

Let us now compute the first and the second derivatives (on $p$) of the functions $v_{0,1}$ in polar coordinates\footnote{For brevity we write $\partial_{x}\eqdef\frac{\partial}{\partial x}$.}:

\[
	\left(\begin{array}{c}
		\partial_{p_1}\\
		\partial_{p_2}
	\end{array}\right)=
	A_\theta
	\left(\begin{array}{c}
		\partial_{r}\\
		\frac1r\partial_{\theta}
	\end{array}\right).
\]

\noindent This yields

\begin{equation}
\label{eq:heel_v_0_v_1_derivatives}
	\left(\begin{array}{c}
		\partial_{p_1}\\
		\partial_{p_2}
	\end{array}\right)v_0=
	A_\theta
	\left(\begin{array}{c}
		v\\v'
	\end{array}\right)
	\quad\mbox{and}\quad
	\left(\begin{array}{c}
		\partial_{p_1}\\
		\partial_{p_2}
	\end{array}\right)v_1=
	A_\theta
	\left(\begin{array}{c}
		1\\0
	\end{array}\right).
\end{equation}

The second derivatives may be determined by using\footnote{The symbol $T$ means transposition.} $\partial_\theta A_\theta=A_{\frac\pi2+\theta}$:

\begin{multline*}
	v''_0 = 
	\left(\begin{array}{c}
		\partial_{p_1}\\
		\partial_{p_2}
	\end{array}\right) 
	\left(\begin{array}{c}
		\partial_{p_1} v_0\\
		\partial_{p_2} v_0
	\end{array}\right)^T=
	A_\theta
	\left(\begin{array}{c}
		\partial_{r}\\
		\frac1r\partial_{\theta}
	\end{array}\right)
	\big[
		(v\ \ v')A_{-\theta}
	\big] = \\
	=\frac1r A_\theta
	\left[
		\left(\begin{array}{cc}
			0&0\\v'&v''
		\end{array}\right)A_{-\theta}-
		\left(\begin{array}{cc}
			0&0\\v&v'
		\end{array}\right)A_{\frac\pi2-\theta}
	\right] =\\
	= \frac1r A_\theta
	\left[
		\left(\begin{array}{cc}
			0&0\\v'&v''
		\end{array}\right)-
		\left(\begin{array}{cc}
			0&0\\v&v'
		\end{array}\right)
		\left(\begin{array}{cc}
			0&-1\\1&0
		\end{array}\right)
	\right] A_{-\theta}
\end{multline*}

\noindent Hence

\begin{equation}
\label{eq:heel_v_0_2der}
	v''_0 \stackrel{\gamma}{=} \frac{1-v}M (v+v'')
	A_\theta 
	\left(\begin{array}{cc}
		0&0\\0&1
	\end{array}\right)A_{-\theta}
\end{equation}

Note that $\det v''_0=0$ and $\mathrm{tr}\, v''_0=\frac{1-v}M (v+v'')$. Since $v<1$, we see that the convexity of the original function $v_0$ is equivalent to the inequality $v+v''\ge 0$ (here $v$ is a Lipschitz function, $v'\in L_\infty$, and $v''$ is a generalized function of first order).

For the second derivative of $v_1$, we have

\begin{equation}
\label{eq:heel_v_1_2der}
	v''_1 =
	A_\theta
	\left(\begin{array}{c}
		\partial_{r}\\
		\frac1r\partial_{\theta}
	\end{array}\right)
	\big[
		(1\ \ 0)A_{-\theta}
	\big] =
	-\frac1r A_\theta
	\left(\begin{array}{cc}
		0&0\\1&0
	\end{array}\right)
	A_{\frac\pi2-\theta} \stackrel{\gamma}{=}
	\frac{1-v}{M} A_\theta
	\left(\begin{array}{cc}
		0&0\\0&1
	\end{array}\right)
	A_{-\theta}.
\end{equation}

Using Lemma \ref{lm:F2_max}, and equalities \eqref{eq:heel_dp_gamma}, \eqref{eq:heel_v_0_v_1_derivatives}, \eqref{eq:heel_v_0_2der}, and \eqref{eq:heel_v_1_2der}, we obtain

\[
	F_0(dp|u^*)|_\gamma = 
	\frac12(1+v+v'')\ 
	\Big(1-v\ \ -v'\Big)
	A_{-\theta}A_{-\frac\pi2}A_\theta
	\left(\begin{array}{cc}
		0&0\\0&1
	\end{array}\right)
	A_{-\theta}A_\theta
	\left(\begin{array}{c}
		\frac{v'}{1-v}\\
		1
	\end{array}\right)
	d\theta.
\]

\noindent Finally, we need to multiply the received matrices, obtaining

\begin{equation}
\label{eq:heel_F2_gamma}
	F_0(dp|u^*)|_\gamma = \frac12(1+v+v'')(1-v)\,d\theta.
\end{equation}

Let us now compute $F_0(0|u^*)=F_0(0|s_\omega)$:

\[
	F_0(0|s_\omega) = L^2(\partial s_\omega(0)) = L^2(\omega).
\]

\noindent We determine the area of $\omega$ using the Stokes--Poincar\'e formula. Namely, since  $\omega=\partial s_\omega(0)=\conv\bigcup_{|p|=1}\partial s_\omega(p)$, from \eqref{eq:heel_v_0_v_1_derivatives} we find

\[
	\partial \omega =\Big\{
	\left(\begin{array}{cc}
		x\\y
	\end{array}\right)=
	A_\theta
	\left(\begin{array}{cc}
		v\\v'
	\end{array}\right),\mbox{ where }\theta\in[0;2\pi]
	\Big\}
\]

\noindent So

\[
	L^2(\omega) = \frac12\int_{\partial\omega}(x\,dy-y\,dx) =
	\frac12\int_0^{2\pi}\big[v^2 + vv''\big]\,d\theta.
\]

Thus, we can now determine the whole resistance of the body from \eqref{eq:heel_F2_gamma} (recall that $f(r)=\frac{1}{1+r^2}$):

\[
	J^*(u^*) = \frac12\int_0^{2\pi}\Big[
		(v^2+vv'')f(0) + (1+v+v'')(1-v)f(r)
	\Big]\,d\theta.
\]

\noindent Let us integrate by parts the terms with $v''$. Since $r\stackrel{\gamma}{=}\frac{M}{1-v}$, we have

\begin{equation}
\label{eq:J_start_heel}
	J^*(u^*) = \frac12\int_0^{2\pi}\Big[
		v^2 + (1-v^2)f(r) - (1-f(r)+rf'(r))v'^2
	\Big]\,d\theta,
\end{equation}

\noindent where

\[
	1-f(r)+rf'(r) = \frac{r^2(r^2-1)}{(1+r^2)^2}\stackrel{\gamma}{\ge}0
\]

\noindent for $r\ge1$ on $\gamma$ by Lemma \ref{lm:r_ge_1_on_gamma}.

The analogy with Sec. \ref{sec:lack_of_strict_convexity} is obvious. Let us assume the converse: the strict inequality $v+v''>0$ holds on an arc $\theta\in]\alpha;\beta[$ and $v\in C^2$. On the one hand, $v$ must be a local minimum of the functional \eqref{eq:J_start_heel} with respect to $C^2$-variations on $[\alpha;\beta]$. On the other hand, the Legendre condition is not fulfilled. Hence the statement is proved.

\medskip

So the result has the following meaning. Firs, if the equality $v(\theta)+v''(\theta)=0$ holds for $\theta\in[\alpha;\beta]$, then the corresponding arc of the border of $\omega$ consists of one point at which the border $\partial\omega$ has a fracture, since it has a non-unique support hyperplane. Second, if the border $\partial\omega$ has a straight-line segment with an angle $\theta_0$ (to the axis $Ox_2$), then, in a neighborhood of $\theta_0$, we have $v(\theta)+v''(\theta) = \lambda\hat\delta(\theta-\theta_0)$, where $\hat\delta$ is the Dirac delta function and $\lambda>0$ is determined by the length of the straight-line segment and its distance from the origin.

\bigskip 

We would like to express deep gratitude to Gerd Wachsmuth for his very important comment concerning Theorem \ref{thm:maxwell}.

\bibliographystyle{abbrv}

\begin{thebibliography}{10}

\bibitem{Plakhov}
A.~Aleksenko and A.~Plakhov.
\newblock Bodies of zero resistance and bodies invisible in one direction.
\newblock {\em Nonlinearity}, 22(6):1247--1258, 2009.

\bibitem{Brock1996}
F.~Brock, V.~Ferone, and B.~Kawohl.
\newblock A symmetry problem in the calculus of variations.
\newblock {\em Calculus of Variations and Partial Differential Equations},
  4(6):593--599, Oct 1996.

\bibitem{Buttazzo1995}
G.~Buttazzo, V.~Ferone, and B.~Kawohl.
\newblock Minimum problems over sets of concave functions and related
  questions.
\newblock {\em Mathematische Nachrichten}, 173(1):71--89, 1995.

\bibitem{ColesantiFirst}
A.~Colesanti.
\newblock A steiner type formula for convex functions.
\newblock {\em Mathematika}, 44(1):195--214, 1997.

\bibitem{ColesantiHug}
A.~Colesanti and D.~Hug.
\newblock Hessian measures of semi-convex functions and applications to support
  measures of convex bodies.
\newblock {\em Manuscripta Mathematica}, 101:209--238, 2000.

\bibitem{Guasoni}
P.~Guasoni.
\newblock {\em Problemi di ottimizzazione di forma su classi di insiemi
  convessi}.
\newblock Test di Laurea, Universita di Pisa, 1995-1996.

\bibitem{Joly}
J.-L. Joly.
\newblock Une famille de topologies sur l'ensemble des fonctions convexes pour
  lesquelles la polarit\'e est bicontinue.
\newblock {\em J. Math. Pures Appl.}, 52(9):421--441, 1973.

\bibitem{LachandCurvature}
T.~Lachand-Robert and M.~Peletier.
\newblock An example of non-convex minimization and an application to
  {Newton}'s problem of the body of least resistance.
\newblock {\em Annales de l'Institut Henri Poincare (C) Non Linear Analysis},
  18(2):179--198, 2001.

\bibitem{LachandPolygon}
T.~Lachand-Robert and M.~Peletier.
\newblock Newton's problem of the body of minimal resistance in the class of
  convex developable functions.
\newblock {\em Mathematische Nachrichten}, 226(1):153--176, 2001.

\bibitem{Marcellini1990}
P.~Marcellini.
\newblock {\em Non convex integrals of the Calculus of Variations}, pages
  16--57.
\newblock Springer Berlin Heidelberg, Berlin, Heidelberg, 1990.

\bibitem{Mosco}
U.~Mosco.
\newblock On the continuity of the young-fenchel transform.
\newblock {\em Journal of Mathematical Analysis and Applications}, 35(3):518 --
  535, 1971.

\bibitem{Rockafellar}
R.~T. Rockafellar.
\newblock {\em Convex Analysis}.
\newblock Princeton University Press, Princeton, 1997.

\bibitem{Wets}
G.~Salinetti and R.~J. Wets.
\newblock On the relations between two types of convergence for convex
  functions.
\newblock {\em Journal of Mathematical Analysis and Applications}, 60(1):211 --
  226, 1977.

\bibitem{Schneider}
R.~Schneider.
\newblock {\em Convex Bodies: The Brunn-Minkowski Theory}.
\newblock Cambridge University Press, Cambridge, second expanded edition
  edition, 2014.

\bibitem{Wachsmuth}
G.~Wachsmuth.
\newblock The numerical solution of {Newton}'s problem of least resistance.
\newblock {\em Mathematical Programming}, 147(1):331--350, Oct 2014.

\end{thebibliography}

\end{document}